\newtheorem{teo}{Theorem}
\newtheorem{lemma}[teo]{Lemma}
\newtheorem{coro}[teo]{Corollary}
\newtheorem{propo}[teo]{Proposition}
\newcommand{\R}{\mathbb{R}}
\newcommand{\ii}{\mathrm{i}}
\newcommand{\eps}{\epsilon}
\newcommand{\N}{\mathbb{N}}
\newcommand{\C}{\mathbb{C}}
\newcommand{\CP}{\mathbb{C}\textrm{P}}
\newcommand{\RP}{\mathbb{R}\mathrm{P}}
\newcommand{\Z}{\mathbb{Z}}
\newcommand{\Q}{\mathcal{Q}}
\theoremstyle{remark} 
\newtheorem{remark}[]{Remark}
\newtheorem{example}[]{Example}
\newtheorem*{ex}{Example}
\title{Convex pencils of real quadratic forms}
\author{ A. Lerario}
\thanks{SISSA, Trieste}
\begin{document}
\begin{abstract}We study the topology of the set $X$ of the solutions of a system of two quadratic inequalities in the real projective space $\RP^n$ (e.g. $X$ is the intersection of two real quadrics). We give explicit formulae for its Betti numbers and for those of its double cover in the sphere $S^n$; we also give similar formulae for level sets of homogeneous quadratic maps to the plane. We discuss some applications of these results, especially in classical convexity theory. We prove the sharp bound $b(X)\leq 2n$ for the total Betti number of $X$; we show that for odd $n$ this bound is attained only by a singular $X$. In the nondegenerate case we also prove the bound on each specific Betti number $b_k(X)\leq 2(k+2).$
\end{abstract}
\maketitle

\section{Introduction}
In this paper we study the topology of a set $X$ defined in the projective space $\RP^n$ by a system of two quadratic inequalities (equalities are permitted as well). Such a set is described by a quadratic map $q:\R^{n+1}\to \R^2$, i.e. a map whose components are real homogeneous quadratic forms $q_0, q_1,$ and a convex polyhedral cone $K\subset \R^2$. Given these two data we may define $X$ as:
$$X=\{[x]\in \RP^n\,|\, q(x)\in K\}$$
More generally one can consider the set of the solutions of a system of $k$ quadratic inequalities, e.g. the intersection of $k$ quadrics in $\RP^n.$ The idea for studying  such an object is to exploit a kind of duality between the inequalities defining $X$ and the variables appearing in them. We explain this idea now. Let
	$$q:V\to W$$ be a quadratic map between two vector spaces and for every covector $\eta$ in $W^{*}$ consider the composition $\eta q$: it is a quadratic form over $V$. As we let $\eta$ vary we can reconstruct the map $q$ itself from the various $\eta q.$ As long as we are concerned with the topology of $X$, we can replace this complicated object with a simpler one: for each $\eta$ in $W^{*}$ we consider only the positive inertia index $\ii^{+}(\eta q)$, i.e. the maximal dimension of a subspace of $V$ on which $\eta q$ is positive definite. Thus, instead of dealing with a map with values in the space of quadratic forms, we have a function to the natural numbers. We let $K^{\circ}$ be the polar cone of $K$ and $S^{k-1}$ be the unit sphere in $W$ (with respect to any scalar product). We define the following sets:	$$\Omega=K^{\circ}\cap S^{k-1}\quad \textrm{and}\quad \Omega^{j}=\{\omega \in \Omega \, |\, \ii^{+}(\omega q)\geq j\}, \quad j\in \N$$ 
The spirit of the mentioned duality is in this procedure of replacing the original framework with the above filtration $\Omega^{n+1}\subseteq \cdots \subseteq \Omega^{0}$. This idea was first introduced by Agrachev in the paper \cite{Agrachev2} to study the topology of the double cover of the complement of $X$ under some regularity assumption; the extensive study of the same idea was the subject of \cite{AgLe}, where the topology of $X$ itself was studied, removing also the nondegeneracy assumption. The technique of \cite{Agrachev2} and \cite{AgLe} is that of spectral sequences, a subject that despite its importance in topology is still considered to be for specialists. The aim of this paper is thus, at first, to give an account of the previous theory avoiding spectral sequences at all. This is possible in the case of \emph{two} quadratic inequalities and the reason fort that is essentially the following observation: in the general case the above filtration is a ``continuous'' object, in the sense that the sets of points where the function $\ii^{+}$ changes its value is an algebraic subset $Z$ of $S^{k-1};$ in the case of a quadratic map to the \emph{plane}, $Z$ reduces to a finite number of points on $S^{1}$ and our object becomes something ``discrete''.\\
Moving to topology, the crucial point is that there is a relation between the geometry of the previous filtration and that of $X$ itself. To give an example, in the case $X$ is the intersection of quadrics in $\RP^{n},$ the following formula relates the Euler characteristic of it with that of the previously defined sets (here $K$ is the zero cone and the components of $q$ are given by the quadratic forms defining $X$):
 \begin{equation}\label{eq:euler}(-1)^{n}\chi(X)=\chi(S^{n})+\sum_{j=0}^n(-1)^{j+1}\chi(\Omega^{j+1}).\end{equation}
 In the case $X$ is the intersection of \emph{two} quadrics the topology of the sets $\Omega^{j},\, j\in \N,$ being semialgebraic subsets of $S^{1}$, is very easy to compute (such sets are finite union of points and arcs).
 \begin{ex}[The bouquet of three circles]Consider the map $q:\R^{4}\to \R^{2}$ defined by 
 	$$x\mapsto (x_{1}^{2}+2x_{0}x_{2}-x_{3}^{2}, x_{1}x_{2})$$
	and the zero cone in $\R^2$.
The subset $X$ of $\RP^3$ defined by $\{q=0\}$ consists of two projective line and an ellipse meeting at one point; this set
is homeomorphic to a bouquet of three circles. Associating to a
quadratic form a symmetric matrix by means of a scalar product, the
family $\eta q$ for $\eta \in \Omega=S^1$ is represented by the
matrix:
$$\eta S=\left(\begin{array}{cccc}
0&0&\eta_0&0\\
0&\eta_0&\eta_1&0\\
\eta_0&\eta_1&0&0\\
0&0&0&-\eta_{0}
\end{array}\right)\quad \eta=(\eta_0,\eta_1)\in S^1$$
The determinant of this matrix vanishes only at the points $\omega=(0,1)$
and $-\omega=(0,-1);$ outside of these points the index function must
be locally constant. Then it is easy to verify that $\ii^+$ equals $2$
everywhere except at this two points, where it equals $1$:
       $$\Omega^1=S^1,\quad \Omega^2=S^1\backslash\{\omega,-\omega\},\quad
\Omega^3=\Omega^4=\emptyset.$$
Applying formula (\ref{eq:euler}) to this example gives:
$$-\chi(X)=\chi(S^{3})+\chi(S^{1})-\chi(S^{1}\backslash\{\omega, -\omega\})=2.$$
\end{ex}

A formula similar to (\ref{eq:euler}) holds in the case of the intersection $Y$ of two quadrics on the sphere (since $Y$ double covers $X$ we simply have to multiply the right hand side by two). In this case we can be even more precise; if we set
	$$Y=q^{-1}(K)\cap S^{n}$$
the following holds for the reduced Betti numbers\footnote{from now on we work with $\Z_{2}$ coefficients.} of $Y$:
\begin{equation}\label{eq:spherical}\tilde b_{k}(Y)=b_{0}(\Omega^{n-k},\Omega^{n-k+1})+b_{1}(\Omega^{n-k-1},\Omega^{n-k}),\quad k<n-2\end{equation}
This formula was essentially proved by Agrachev in \cite{Agrachev2} under some smoothness hypothesis; we will give a proof of it with no nondegeracy assumption in the Appendix. It will be the only part where spectral sequences will be used in this paper; we have added this proof both for the sake of completeness and to give the reader a soft flavor of the spectral sequences arguments. To stress the difference with the general case, we must say the analogous formulae for the Betti numbers of intersection of more than two quadrics only give bounds for them.\\
As pointed out to the author by the referee, similar results were obtained in the nondegenerate case in \cite{Lo}; there it is given even a classification up to diffeomorphisms of the set of smooth spherical intersection of two quadrics.\\
The formula to compute the Betti numbers of $X$ is slightly different from (\ref{eq:spherical}) and the reason for that lies in the structure of the homology of $\RP^{n}$ which is richer than the one of the sphere. Before going into it we will give the formula for the Betti numbers of $\RP^{n}\backslash X$:
\begin{equation}\label{eq:complproj}b_{k}(\RP^{n}\backslash X)=b_{0}(\Omega^{k+1})+b_{1}(\Omega^{k}),\quad k\in \N\end{equation}
With (\ref{eq:complproj})  at our disposal, we can already give some applications. The second goal of this paper is to show how classical theorems in convexity theory can be easily derived from this setting. For example the quadratic convexity theorem (the image of the sphere $S^{n}$ under a homogeneous quadratic map $q:\R^{n+1}\to \R^{2}$ with $n\geq 2$ is a convex subset of the plane) is obtained using (\ref{eq:complproj}). Notice that  this result is certainly false in the case the target space is three dimensional, as the map $x\mapsto(x_{0}x_{1},x_{0}x_{2},x_{1}x_{2})$ shows.\\
Moving back to the Betti numbers of $X$, in order to get them from (\ref{eq:complproj}) we need to compute the rank of the homomorphism induced on the homology by the inclusion $i:X\hookrightarrow \RP^{n}$ (this computation was not necessary for the sphere because of Alexander duality). If we let $\mu$ be the maximum of $\ii^{+}$ on $\Omega$ we have:
	\begin{equation}\label{eq:rank}\textrm{rk}(i_{*})_{k}=b_{0}(C\Omega,\Omega^{k+1}), \quad k\neq n- \mu\end{equation}
where $C\Omega=(\Omega \times [0,1])/(\Omega \times \{0\})$ is the topological cone of $\Omega$ (see \cite{Hatcher}). The critical case $k=n-\mu$ is more subtle and we need extra information. For this purpose we introduce the bundle $L_{\mu}\to \Omega^{\mu}$ whose fiber over the point $\eta\in \Omega^{\mu}$ equals $\textrm{span}\{x\in \R^{n+1} \, |\, \exists \lambda >0 \,\textrm{ s.t.}\, (\eta Q)x=\lambda x\}$ and whose vector bundle structure is given by its inclusion in $\Omega^{\mu}\times \R^{n+1}.$ The extra information we need is the first Stiefel-Whitney class of $L_{\mu}:$
	$$w_{1}(L_{\mu})\in H^{1}(\Omega^{\mu}).$$ 
Once we have this data we can compute also the rank of $(i_{*})_{n-\mu}$:
	\begin{equation}\label{eq:sw}\textrm{rk}(i_{*})_{n-\mu}=1 \quad \textrm{iff}\quad w_{1}(L_{\mu})=0.\end{equation}
Consider now the table $E=(e^{j,j})_{i,j\in \Z}$ whose nonzero part is the following:
$$
E'=\begin{array}{|c|c|c|}
\hline
1&0&0\\
\hline
\vdots&\vdots&\vdots\\
\hline
1&0&0\\

\hline
c&0&0\\
\hline
0& b_{0}(\Omega^{\mu})-1&d\\
\hline
\vdots&\vdots&\vdots\\

\hline
0&b_{0}(\Omega^{1})-1&b_{1}(\Omega^{1})\\
\hline
\end{array}
$$
where  $c=e^{0,\mu}$ and we have $(c,d)=(1,b_{1}(\Omega^{\mu}))$ if $w_{1,\mu}=0$ and $(c,d)=(0,0)$ otherwise. In term of the previous table it is easy to write the formula for the Betti numbers of $X$: if $\mu=n+1$ then $X$ is empty; in the contrary case for every $k\in \Z$ we have:
\begin{equation}\label{eq:proj}b_{k}(X)=e^{0,n-k}+e^{1,n-k-1}+e^{2,n-k-2}.\end{equation}
Notice that according to what we already stated $\textrm{rk}(i_{*})_{k}=e^{0,n-k}$. The previous table is essentially the way to plug in Alexander-Pontryiagin duality in formula (\ref{eq:proj}).
\begin{ex}[The bouquet of three circles; continuation]
In this case the previous table is the following:
$$E=\begin{array}{|c|c|c}
1&0&0\\
1&0&0\\
0&1&0\\
0&0&1\\
\hline
\end{array}$$
This is because $\mu=2$ and $\Omega^{2}$ is not the whole $S^{1}$ (thus $w_{1}(L_{2})=0$); in particular we have:
	$$b_{0}(X)=e^{0,3}+e^{1,2}+e^{2,1}=1, \quad b_{1}(X)=e^{0,2}+e^{1,1}+e^{2,0}=3.$$
Notice also that $\textrm{rk}(i_{*})_{1}=e^{0,2}=1$, in accordance with the fact that $X$ contains a projective line.
\end{ex}
For the reader familiar with spectral sequences we can say that the previous table gives the ranks of the second term of a spectral sequence converging to the homology of $X$; the class $w_{1}(L_{\mu})$ gives the second differential and for the case of two quadrics that's enough (the spectral sequence degenerates at the third step); otherwise higher differentials have to be calculated (the reader is referred to \cite{AgLe} for a detailed discussion on the general case); this evidence again confirms the difference with the case of more than two quadrics.\\Thus in a certain sense we are stating our results using the language of spectral sequences, without actually using them.\\
Incidentally we notice that it is possible to ``send the number of variables to infinity'' and our procedure is stable with respect to this limit: formulae similar to the previous ones hold for the set of the solutions of a system of two quadratic inequalities on the infinite dimensional sphere; the interested reader is referred to \cite{Le}.\\
The third, and final goal of this paper is to use the previous theory for studying the homological complexity of $X$. Consider the problem of bounding the sum of the Betti numbers of $X$, such number is denoted by $b(X)$ and is called the \emph{homological complexity}. The classical Oleinik-Petrovskii-Thom-Milnor-Smith bound gives for the intersection of $k$ quadrics in $\RP^n$ the estimate $b(X)\leq O(k)^n$. Another manifestation of the mentioned duality between the number of quadratic equations and the number of variables is a classical result due to Barvinok \cite{Barvinok}, which states that the two numbers $k$ and $n$ can be interchanged in the previous bound, giving $b(X)\leq n^{O(k)}$ (the implied constant is at least two). In the case $X$ is the intersection of two quadrics in $\RP^n$ we will see that indeed we have the sharp estimate:
\begin{equation}\label{eqbound}b(X)\leq 2n\end{equation}
It is interesting at this point, as suggested by the referee, to decide whether the maximal complexity is attained by a smooth $X$ or not. It turns out that for even $n$ this is the case, while for odd $n$ it is not. 
\begin{ex}Consider the algebraic set $X$ in $\RP^3$ defined by: $$X=\{x_0^2-x_2^2=x_1^2-x_3^2=0\}$$ Then $X$ equals the union of the four projective lines $L_1=\{x_0=x_2, x_1=-x_3\}, L_2=\{x_0=-x_2, x_1=x_3\}, L_3=\{x_0=-x_2, x_1=-x_3\}$ and $L_4=\{x_0=x_2, x_1=x_3\}$; an easy computations of the combinatorics of these lines shows that $X$ is homotopically equivalent to the wedge of $5$ circles and its total Betti number is $6$. Similarly the algebraic set $X_\C$ defined by the same equations as $X$ in $\CP^3$ is homotopically equivalent to the wedge of $5$ spheres and $b(X_\C)=6.$\\On the other hand by the adjunction formula the complete intersection of two quadrics in $\CP^3$ has genus one and total Betti number $4$; thus by Smith's inequality\footnote{This inequality is the statement that the homological complexity of a real algebraic variety $X_\R$ is always less or equal to the one of its complex part $X_\C$} the  for every \emph{smooth} intersection $X'$ of two quadrics in $\RP^3$ we have $b(X')\leq 4.$ \end{ex}
We will provide in the paper a general family of examples, smooth for even $n$ and singular for odd ones, attaining the bound (\ref{eqbound}). We notice that this family of examples, together with Smith's inequality, proves that for the intersection of two complex quadrics in an odd dimensional complex projective space the maximal complexity is attained by a singular intersection.
In the case of a system of two quadratic \emph{inequalities} a similar bound can be produced.\\
We conclude the paper with an estimate on each single Betti number; for this estimate though we require $X$ to be a \emph{nonsingular} intersection of two quadrics:
\begin{equation}\label{eqboundsingle}b_k(X)\leq 2(k+2)\end{equation}
What is interesting about the bound (\ref{eqboundsingle}) is that for each specific Betti number it does not depend on $n$.\\
The paper is organized as follows. In Section 2 we introduce the necessary notation. In Section 3 we make a general construction to study one dimensional families of quadratic forms. Section 4 is devoted to prove formulae  (\ref{eq:complproj}) and (\ref{eq:proj}); the statement (\ref{eq:euler}) about the Euler characteristic directly follows from (\ref{eq:complproj}). In Section 5 we discuss the proof of some classical results using the introduced technique. Section 6 deals with the homological complexity and contains the proofs of (\ref{eqbound}) and (\ref{eqboundsingle}). In the Appendix we give a proof of formula (\ref{eq:spherical}) as well as a similar one for the level sets of a homogeneous quadratic map.\section*{acknowledgments}
The author is very grateful to his teacher A. A. Agrachev for the remarkable contribution to this work; it can be considered as the natural development of many of his ideas. The author wishes to thanks also the anonymous referee, for her/his patience and for the many helpful suggestions which improved both the presentation and the results.

\section{Notation} For the rest of the paper we will assume that the following two data will be specified:
(i) a closed convex cone $K$ in $\R^2$; (ii) a pair $q_0,q_1$ of real homogeneous quadratic form in $n+1$ variables. The set we will be interested in is:
$$X=\{[x]\in \RP^n\,|\, (q_0(x), q_1(x))\in K\}.$$
In the particular case $K=\{0\}$ the set $X$ is the intersection of the two quadrics $\{q_0=0\}$ and $\{q_1=0\}$; in the general case $X$ is the set of the solutions of a system of quadratic inequalities (the inequalities being given by the presentation of the polyhedral cone $K$).\\
If $V$ is a finite dimensional real vector space, the symbol $\Q(V)$ will denote the set of real quadratic forms over $V;$ in the case $V=\R^k$ we will simply write $\Q(k)$.\\
For every $p$ in $\Q(V)$ the positve inertia index $\ii^+(q)$ is the maximal dimension of a subspace $W\subset V$ such that $p|_W>0.$ Once a scalar product has been fixed the following gives a rule to identify each $q$ in $\Q(k)$ with a $Q$ in $\textrm{Sym}_k(\R)$, the set of real symmetric matrices of dimension $k$:
$$q(x)=\langle x, Qx\rangle \quad \forall x\in \R^k.$$
Under this (linear) identification the inertia index of $q$ equals the number of positive eigenvalues of $Q.$\\
In the case $q=(q_0,q_1)$ is a pair of quadratic forms in $\Q(V)$ and $\omega=(\omega_0,\omega_1)$ is a point in $(\R^2)^*$, we will write $\omega q$ for the quadratic form $\omega_0 q_0 + \omega_1 q_1;$ similarly if $Q=(Q_0, Q_1)$ is a pair of symmetric matrices in $\textrm{Sym}_k(\R)$ we will write $\omega Q$ for the symmetric matrix $\omega_0 Q_0+\omega_1 Q_1.$ This notation comes from the fact that we may interpret $q=(q_0, q_1)$ as a quadratic \emph{map} from $V$ to $\R^2$ and $\omega q$ is simply the composition of this map with the covector $\omega.$ The map $q$ itself defines also a map $q^*:(\R^2)^*\to \Q(V)$ given by $\omega \mapsto \omega q.$ These are all ways of reformulating the same fact.\\
Given $q=(q_0,q_1)$ and $K$ we will need to define the sets:
$$\Omega=K^\circ \cap S^1\quad \textrm{and}\quad\Omega^j=\{\omega \in \Omega\,|\, \ii^+(\omega q)\geq j\},\quad j\in \N$$
where $K^\circ=\{\eta \in (\R^2)^*\, | \eta(y)\leq 0 \textrm{ for all $y$ in $K$}\}$\\
If $A$ is a topological space we will denote by $H_*(A)$ its homology with $\Z_2$ coefficients and by $b_*(A)$ its Betti numbers with $\Z_2$ coefficients. In our case will aways be defined the number $b(A)$: it is the sum of the Betti numbers of $A$ with $\Z_2$ coefficients (this quantity is usually referred as the \emph{homological complexity} of $A$). Similar definitions apply for a pair of spaces $(A, B).$
\section{A preliminary construction}

Let now $\Omega$ be an arbitrary closed semialgebraic subset of the unit circle $S^{1}$ and $f:\Omega \to \Q(n+1)$ be a semialgebraic map. The map $f$ describes a family of quadratic forms varying semialgebraically w.r.t. $\omega\in \Omega.$ We define the semialgebraic function
\begin{equation}\label{defF}F:\Omega\times \RP^n\to \R\quad \quad F(\omega,[x])=f(\omega)(x).\end{equation}The fact that this function is well defined follows from the fact that $f(\omega)$ is a quadratic form, homogeneous of even degree, and thus  $f(\omega)(x)=f(\omega)(-x);$ the semialgbebraicity of $F$ is obvious. Consider the semialgebraic set:
	\begin{equation}\label{defC}C=\{F\geq 0\}.\end{equation}
Since the projection $p_{1}:C\to \Omega$ is a semialgebraic map, then by Hardt's triviality theorem (see \cite{BCR}) there exixts a finite semialgebraic partition $\Omega=\coprod S_{l}$ such that  $p_{1}$ is trivial over each $S_{l}$. The semialgebraic subsets of $\Omega$ are union of points and intervals (arcs); thus there exist a finite number of points $\{\omega_{\alpha}\}_{\alpha\in A}$ and a finite number of open arcs $\{I_{\alpha\beta}\}_{\alpha, \beta \in A}$ such that $C$ is the disjoint union of the inverse image under $p_{1}$ of them; moreover $p_{1}$ is trivial over each of these subsets of $\Omega.$ For each $\omega \in \Omega$ and $k\in \N$ we define:
	\begin{equation}\label{genomega}a(\omega)= n-\ii^{-}(f(\omega))\quad \textrm{and}\quad\Omega_{n-k}=\{\omega \in \Omega \, |\, a(\omega)\geq k\}\end{equation}
	In other words the set $\Omega_{n-k}$ equals the closed semialgebraic subset of $\Omega$ consisting of those $\omega$ for which $\ii^-(f(\omega))\leq n-k.$\\
	 Given a quadratic form $q$ we define $P^+(q)$ to be the positive eigenspace of the corresponding symmetric matrix $Q.$ Using this notation we clearly have that $p_{1}^{-1}(\eta)$ deformation retracts to $P^{+}(f(\eta))\simeq \RP^{a(\eta)}.$\\
	 Consider now the topological space
	$$S=\{(\omega, [x])\in \Omega \times \RP^{n}\, |\, [x]\in P^{+}(\omega)\}.$$
	\begin{lemma} The inclusion $S\hookrightarrow C$ is a homotopy equivalence; indeed $C$ deformation retracts to $S$.
\begin{proof} For every $\alpha\in A$ let $U_{\alpha}$ be a closed neighborhood of $\omega_{\alpha}$ such that the inclusion $P^{+}(\omega_{\alpha})\hookrightarrow C|_{U_{\alpha}}$ is a homotopy equivalence (such a neighborhood exists by triangulability of the semialgebraic function $f:(\omega, [x])\mapsto \textrm{dist}(\omega, \omega_{\alpha})$ and noticing that the inclusion $P^{+}(\omega_{\alpha})\hookrightarrow \{f=0\}$ is a homotopy equivalence). If $U_{\alpha}$ is sufficiently small, then $S|_{U_{\alpha}}$ deformation retracts to $P^{+}(\omega_{\alpha}):$ since the eigenvalues of $f(\omega)$ depend continuously on $\omega$ and $\textrm{dim}(P^+(\omega_{\alpha}))\geq \textrm{dim}(P^+(\omega))$ for $\omega$ sufficiently close to $\omega_{\alpha}$, the deformation retraction is performed simply by sending   each $P^{+}(\omega)$ to $\lim_{\omega \to \omega_{\alpha}}P^{+}(\omega)\subseteq P^{+}(\omega_{\alpha}).$ Now we have that $P^{+}(\omega_{\alpha})\hookrightarrow S|_{U_{\alpha}}$ and $P^{+}(\omega_{\alpha})\hookrightarrow C|_{U_{\alpha}}$ are both homotopy equivalences; since the second one is the composition $P^{+}(\omega_{\alpha})\hookrightarrow S|_{U_{\alpha}}\hookrightarrow C|_{U_{\alpha}}$ then $S|_{U_{\alpha}}\hookrightarrow C|_{U_{\alpha}}$ also is a homotopy equivalence. Since $(C|_{U_{\alpha}},S|_{U_{\alpha}})$ is a CW-pair, then the previous homotopy equivalence implies $C|_{U_{\alpha}}$ deformation retracts to $S|_{U_{\alpha}}$ (see \cite{Hatcher}).\\
Let now $W=(\cup_{\alpha}V_{\alpha});$ since $C|_{W^{c}}$ is a locally trivial fibration, then clearly it deformation retracts to $S|_{W^{c}}$; since each $V_{\alpha}$ is closed, then $C$ deformation retracts to $C|_{W}\cup S|_{W^{c}}$. Since the deformation retraction of each $C|_{U_{\alpha}}$ fixes $S|_{U_{\alpha}}$ and $\textrm{Cl}(S|_{W^{c}})\cap C|_{W}\subseteq S $ then all these deformation retractions match together to give de desired one of $C$ to $S.$\end{proof}
	\end{lemma}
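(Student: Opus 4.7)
The plan is to leverage the Hardt-triviality decomposition $\Omega = \{\omega_\alpha\} \sqcup \{I_{\alpha\beta}\}$ already produced in the text, build the deformation retraction piece by piece, and then glue. On each open arc $I_{\alpha\beta}$ the triviality of $p_1$ forces $\ii^+(f(\omega))$ to be constant, so the positive eigenspace $P^+(f(\omega))$ assembles into a genuine vector subbundle of $I_{\alpha\beta}\times\R^{n+1}$. Over such an arc, the fiber of $p_1$ is the sublevel set $\{f(\omega)\geq 0\}\subset\RP^n$, which admits a canonical strong deformation retract onto $P^+(f(\omega))$ (for instance, decompose $\R^{n+1}=P^+(\omega)\oplus P^0(\omega)\oplus P^-(\omega)$ and slide the non-positive components to zero, then projectivize). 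Doing this fiberwise and continuously in $\omega$ yields a deformation retraction of $C|_{I_{\alpha\beta}}$ onto $S|_{I_{\alpha\beta}}$.

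The delicate case is at the isolated points $\omega_\alpha$, where $\ii^+$ may jump up and the subbundle $P^+$ of the previous step fails to extend continuously. The key input is the continuity of eigenvalues: for every $\omega$ close to $\omega_\alpha$ one has $\dim P^+(f(\omega))\leq \dim P^+(f(\omega_\alpha))$, and any limit of the subspaces $P^+(f(\omega))$ lies inside $P^+(f(\omega_\alpha))$. I would therefore pick a closed semialgebraic neighborhood $U_\alpha$ of $\omega_\alpha$ in $\Omega$ small enough that both inclusions
\[
P^+(f(\omega_\alpha))\hookrightarrow S|_{U_\alpha}\hookrightarrow C|_{U_\alpha}
\]
are homotopy equivalences. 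The second follows from the conic structure of semialgebraic sets around $\omega_\alpha$, obtained via triangulability of the distance function to $\omega_\alpha$; the first follows from collapsing each $P^+(f(\omega))$ onto its limit inside $P^+(f(\omega_\alpha))$. Composition then gives that $S|_{U_\alpha}\hookrightarrow C|_{U_\alpha}$ is a homotopy equivalence of a CW-pair, hence a deformation retraction by a standard upgrade argument (e.g.\ Hatcher).

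To assemble the global retraction, pick smaller closed neighborhoods $V_\alpha\subset U_\alpha$ with $\omega_\alpha\in\mathrm{int}(V_\alpha)$ and let $W=\bigcup_\alpha V_\alpha$. Over the complement $W^c$ the projection $p_1$ is a locally trivial fibration, so the arc-wise construction of the first paragraph furnishes a deformation retraction of $C|_{W^c}$ onto $S|_{W^c}$. Both retractions fix $S$ pointwise, and on the overlap $\mathrm{Cl}(S|_{W^c})\cap C|_W\subseteq S$ the two homotopies coincide, so they concatenate into the desired retraction of $C$ onto $S$.

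The main obstacle is the continuity of the retraction at the jump points $\omega_\alpha$: the assignment $\omega\mapsto P^+(f(\omega))$ is only upper-semicontinuous in dimension, so one needs the semialgebraic triangulation — and hence the fact that $\Omega\subset S^1$ is only one-dimensional — to produce a continuous family of linear retractions carrying $P^+(f(\omega))$ into $P^+(f(\omega_\alpha))$ as $\omega$ approaches $\omega_\alpha$. Once that local continuity is secured, the gluing is essentially formal.
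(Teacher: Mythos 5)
Your proposal is essentially the paper's own proof: the same local analysis at the exceptional points $\omega_{\alpha}$ (local conic structure via triangulability of the distance function, continuity of eigenvalues to retract $S|_{U_{\alpha}}$ onto $P^{+}(\omega_{\alpha})$, and the CW-pair upgrade from homotopy equivalence to deformation retraction via Hatcher), followed by the same gluing over $W=\bigcup_{\alpha}V_{\alpha}$ using that $C$ is a locally trivial fibration over the complement. The only cosmetic difference is that you spell out the fiberwise retraction over the open arcs (sliding the non-positive components to zero), which the paper leaves implicit in the phrase ``clearly it deformation retracts.''
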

The following lemma describes the cohomology of $C.$
	\begin{lemma}\label{one}$H^{k}(C)\simeq H_{0}(\Omega_{n-k})\oplus H_{1}(\Omega_{n-k+1}).$
\begin{proof}We only give a sketch; the rigorous details are left to the reader. A spectral sequence argument, using the Leray sheaf, will be provided in the Appendix 1.\\
We can give a cellular structure to $S$ in the following way: for every $\omega_{\alpha}$ such that $a(\omega_{\alpha})\geq k$ we place a $k$-dimensional cell $e_{\alpha}^{k}$ representing a $k$-dimensional cell of $P^{+}(\omega_{\alpha});$ for every arc $I_{\alpha\beta}$ such that $a(\omega)\geq k-1$ for every $\omega \in I_{\alpha \beta}$ we place another $k$-dimensional cell $e_{\alpha\beta}^{k}$ representing a $k$ dimensional cell of $S|_{I_{\alpha\beta}}.$ In this way, working with $\Z_{2}$ coefficients we have:
	$$\partial e_{\alpha}^{k}=0\quad \textrm{and}\quad \partial e_{\alpha\beta}^{k}=e_{\alpha}^{k-1}+e_{\beta}^{k-1}$$
and the statement follows now from cellular homology  and Leray-Hirsch theorem (see \cite{Hatcher}).
\end{proof}
	\end{lemma}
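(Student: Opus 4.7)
The plan is to reduce the lemma to a purely combinatorial calculation on a carefully chosen CW decomposition of $S$, which is homotopy equivalent to $C$ by the previous lemma. The decomposition is guided by the Hardt partition $\Omega=\{\omega_\alpha\}_{\alpha\in A}\sqcup\{I_{\alpha\beta}\}_{\alpha,\beta\in A}$ already provided, together with the fact that $\ii^{-}\circ f$ is locally constant on each arc. By upper semicontinuity of $a=n-\ii^{-}\circ f$, the constant value $a_{\alpha\beta}$ of $a$ on the open arc $I_{\alpha\beta}$ satisfies $a_{\alpha\beta}\leq a(\omega_\alpha)$ and $a_{\alpha\beta}\leq a(\omega_\beta)$. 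Consequently $\Omega_{n-k}$ is exactly the $1$-complex whose vertices are the $\omega_\alpha$ with $a(\omega_\alpha)\geq k$ and whose $1$-cells are the closed arcs $\overline{I}_{\alpha\beta}$ with $a_{\alpha\beta}\geq k$.

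Over each $\omega_\alpha$ I place the standard cells $e_\alpha^0,\ldots,e_\alpha^{a(\omega_\alpha)}$ of $P^{+}(\omega_\alpha)\simeq \RP^{a(\omega_\alpha)}$. Over each $I_{\alpha\beta}$ I use the identification $S|_{I_{\alpha\beta}}\simeq I_{\alpha\beta}\times \RP^{a_{\alpha\beta}}$ (the bundle is trivial over the contractible arc, and Leray--Hirsch justifies the use of product cells) to place cells $e_{\alpha\beta}^{1},\ldots, e_{\alpha\beta}^{a_{\alpha\beta}+1}$ arising as the products of the open $1$-cell $I_{\alpha\beta}$ with the standard cells of $\RP^{a_{\alpha\beta}}$. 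With $\Z_2$-coefficients, $\partial e_\alpha^k=0$ (standard for $\RP^m$), and the K\"unneth product rule together with $\partial I_{\alpha\beta}=\omega_\alpha+\omega_\beta$ yields $\partial e_{\alpha\beta}^{k}=e_\alpha^{k-1}+e_\beta^{k-1}$.

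Writing $C_k=V_k\oplus E_k$ with $V_k=\langle e_\alpha^k:\,a(\omega_\alpha)\geq k\rangle$ and $E_k=\langle e_{\alpha\beta}^k:\,a_{\alpha\beta}\geq k-1\rangle$, the boundary is block-triangular: it kills $V_k$ and carries $E_k$ into $V_{k-1}$. Hence $H_k(S)\cong V_k/\partial E_{k+1}\,\oplus\,\ker(\partial|_{E_k})$. A generator $e_{\alpha\beta}^{k+1}$ lies in $E_{k+1}$ iff $a_{\alpha\beta}\geq k$, iff $\overline{I}_{\alpha\beta}\subset\Omega_{n-k}$, and its boundary is the sum of the endpoints, both of which lie in $\Omega_{n-k}$ since $a(\omega_\alpha),a(\omega_\beta)\geq a_{\alpha\beta}\geq k$; therefore $V_k/\partial E_{k+1}$ is the cellular $H_0$ of the $1$-complex $\Omega_{n-k}$. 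Analogously $\ker(\partial|_{E_k})$ is the $1$-cycle space of the $1$-complex $\Omega_{n-k+1}$, namely $H_1(\Omega_{n-k+1})$. By the universal coefficient theorem with $\Z_2$-coefficients, $H^k(S)\cong H_k(S)$, and the previous lemma gives $H^k(C)=H^k(S)$, concluding the proof.

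The main obstacle I expect is constructing a genuine CW structure on $S$ realising the combinatorial picture above, in particular controlling how the arc cells $e_{\alpha\beta}^{k}$ attach to the endpoint fibre $\RP^{a(\omega_\alpha)}$ when the fibre dimension strictly increases there (the case $a(\omega_\alpha)>a_{\alpha\beta}$). The key point is that $P^{+}(\omega)$ varies continuously in the appropriate Grassmannian along the interior of $I_{\alpha\beta}$, and $\lim_{\omega\to\omega_\alpha}P^{+}(\omega)\subseteq P^{+}(\omega_\alpha)$, so the attaching map of $e_{\alpha\beta}^k$ lands inside the subcomplex $e_\alpha^0\cup\cdots\cup e_\alpha^{k-1}$, which is available precisely because $a(\omega_\alpha)\geq a_{\alpha\beta}\geq k-1$. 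Once this structural point is set up, the rest of the argument is the short cellular computation above.
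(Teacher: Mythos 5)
Your proof is correct and follows essentially the same route as the paper: the paper's own (sketched) argument builds exactly this cell structure on $S$, guided by the Hardt partition of $\Omega$ into points and arcs, with the same mod~$2$ boundary formulas $\partial e_{\alpha}^{k}=0$ and $\partial e_{\alpha\beta}^{k}=e_{\alpha}^{k-1}+e_{\beta}^{k-1}$, and concludes by cellular homology together with Leray--Hirsch. Your explicit block-triangular chain computation, the identification of $\Omega_{n-k}$ with the subcomplex of vertices and arcs where $a\geq k$, and the universal coefficient step merely fill in what the paper delegates to those citations, and the technical point you flag (the attaching maps at endpoints where the fibre dimension jumps, controlled by $\lim_{\omega\to\omega_{\alpha}}P^{+}(\omega)\subseteq P^{+}(\omega_{\alpha})$) is exactly the detail the paper also leaves to the reader.
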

	
Together with the closed set $C$ we will be interested in the set $B$ defined by taking the strict inequality in (\ref{defC}):
\begin{equation}\label{defB}B=\{F>0\}\end{equation}
Similarly to (\ref{genomega}) we define the open semialgebraic sets
$$\Omega^k=\{\omega\in \Omega\,|\, \ii^+(f(\omega))\geq k\},\quad k\in \N$$
The cohomology of $B$ is given by the following lemma.
\begin{lemma}\label{cooB}$H^k(B)\simeq H_0(\Omega^{k+1})\oplus H_1(\Omega^{k}).$
\begin{proof}
Since $F$ is a proper semialgebraic function, then for $\epsilon>0$ small enough the inclusion
$$B\hookrightarrow\{F\geq \eps\}$$
is a homotopy equivalence. On the other hand the set $\{F\geq \eps\}$ is homeomorphic to the set $C$ defined as before but for the map $f_\eps=f-\eps p$ for a positive definite form $p,$ i.e. setting $F_\eps(\omega,[x])=F(\omega, [x])-\eps p(x)$ we have 
$$\{F\geq \eps\}\simeq \{F_\eps\geq 0\}$$ Thus by lemma \ref{one} the cohomology of $B$ is isomorphic to
$$H^k(B)\simeq H_0(\Omega_{n-k}(\eps))\oplus H_1(\Omega_{n-k+1}(\eps))$$
where 
$$\Omega_{n-k}(\eps)=\{\omega\in \Omega\,|\, \ii^+(\eps p-f(\omega))\leq n-k\}$$
and the lemma follows from the following proposition.
\end{proof}
\end{lemma}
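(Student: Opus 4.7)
The plan is to execute the strategy already visible in the opening of the proof: compare the open set $B = \{F > 0\}$ with the closed set $\{F \geq \epsilon\}$ for small $\epsilon > 0$, identify the latter with a $C$-type set of Lemma \ref{one} for a suitably perturbed family, and then use an eigenvalue computation to relate the resulting filtration $\{\Omega_{n-k}(\epsilon)\}$ to the filtration $\{\Omega^{k}\}$ appearing in the statement. For the first step, since $F:\Omega \times \RP^n\to \R$ is proper and semialgebraic, Hardt triviality over $(0,+\infty)$ gives finitely many critical values; for $\epsilon$ smaller than the smallest positive one, the restriction of $F$ over $(0,\epsilon]$ is a trivial semialgebraic fibration, and pushing along a gradient-like flow yields a deformation retraction of $B$ onto $\{F\geq \epsilon\}$.

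For the second step, the identification $\{F\geq\epsilon\} = C_\epsilon$, with $C_\epsilon$ the closed set attached by Lemma \ref{one} to $f_\epsilon := f - \epsilon p$ (for some chosen positive definite $p\in\Q(n+1)$), is tautological: the inequality $f(\omega)(x)\geq \epsilon p(x)$ is exactly $F_\epsilon(\omega,[x])\geq 0$. Applying Lemma \ref{one} to $f_\epsilon$ gives
\[
H^k(B)\simeq H_0(\Omega_{n-k}(\epsilon))\oplus H_1(\Omega_{n-k+1}(\epsilon)),
\]
where $\Omega_{n-k}(\epsilon)=\{\omega\in\Omega\,|\,\ii^+(\epsilon p - f(\omega))\leq n-k\}$. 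The third step, the content of the promised proposition, is the identity $\ii^+(\epsilon p - f(\omega))=n+1-\ii^+(f(\omega))$ for $\omega\in\Omega$ and $\epsilon$ small. Fixing $\omega$, Weyl's inequalities imply that for $\epsilon$ smaller than the minimum of $|\lambda|$ over nonzero eigenvalues of $f(\omega)$, the positive eigenvalues of $-f(\omega)$ stay positive and the negative ones stay negative under the perturbation $+\epsilon p$, while the zero eigenvalues all become positive since $p>0$. Summing, $\ii^+(\epsilon p - f(\omega)) = \ii^-(f(\omega)) + \dim\ker f(\omega) = n+1-\ii^+(f(\omega))$, so $\Omega_{n-k}(\epsilon)=\Omega^{k+1}$ and $\Omega_{n-k+1}(\epsilon)=\Omega^{k}$.

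The main obstacle is the uniform choice of $\epsilon$: the minimum nonzero $|\lambda|$ may degenerate to $0$ on loci where the rank of $f$ drops. I would handle this by stratifying $\Omega$ by the rank of $f$ into finitely many semialgebraic pieces on each of which a uniform $\epsilon$ works, then taking the minimum over these finitely many values; equivalently, since both $\Omega_{n-k}(\epsilon)$ and $\Omega^{k+1}$ are closed semialgebraic subsets of compact $\Omega$, and the pointwise equality holds once $\epsilon$ is small enough at each $\omega$, a common $\epsilon$ is produced by semialgebraic triviality of the family $\epsilon\mapsto \Omega_{n-k}(\epsilon)$ near $\epsilon=0^+$. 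With the uniform $\epsilon$ in hand, the three stages combine to yield the claimed isomorphism.
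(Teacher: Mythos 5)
Your first two steps (the deformation retraction of $B=\{F>0\}$ onto $\{F\geq\eps\}$ via properness and Hardt triviality, and the tautological identification of $\{F\geq\eps\}$ with the set $C$ attached to $f_\eps=f-\eps p$) are fine and match the paper. The gap is in your third step. You claim that for a \emph{uniform} small $\eps$ one has the set-theoretic equality $\Omega_{n-k}(\eps)=\Omega^{k+1}$, based on the pointwise identity $\ii^{+}(\eps p-f(\omega))=n+1-\ii^{+}(f(\omega))$. That identity does hold for each fixed $\omega$ once $\eps<\eps(\omega)$, but no uniform $\eps$ can make the two sets equal: $\Omega^{k+1}=\{\ii^{+}(f(\omega))\geq k+1\}$ is an \emph{open} subset of $\Omega$ (not closed, in general), while $\Omega_{n-k}(\eps)$ is closed, since $\ii^{-}$ is lower semicontinuous. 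Already in the paper's bouquet-of-three-circles example, $\Omega^{2}=S^{1}\setminus\{\omega,-\omega\}$, and for every fixed $\eps>0$ the points close to $\pm\omega$ have a positive eigenvalue smaller than the perturbation, so they are expelled from $\Omega_{n-k}(\eps)$; the inclusion $\Omega_{n-k}(\eps)\subsetneq\Omega^{k+1}$ is strict for every $\eps>0$. Your two proposed repairs do not close this hole: stratifying by the rank of $f(\omega)$ does not produce a per-stratum uniform $\eps$, because the strata are only locally closed (hence not compact) and the smallest nonzero eigenvalue modulus degenerates as one approaches the lower-rank boundary from within a stratum; and the alternative argument starts from the incorrect premise that $\Omega^{k+1}$ is closed. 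Hardt triviality of $\eps\mapsto\Omega_{n-k}(\eps)$ near $\eps=0^{+}$ gives that the topological type of $\Omega_{n-k}(\eps)$ stabilizes, but by itself it says nothing about its relation to $\Omega^{k+1}$.

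What is actually true, and what the lemma needs, is only the weaker statement $H_{*}(\Omega_{n-k}(\eps))\simeq H_{*}(\Omega^{k+1})$ for all small $\eps$; this is exactly Proposition \ref{union} in the paper. Its proof combines your pointwise computation with a limit argument that your proposal is missing: one shows $\Omega^{j+1}=\bigcup_{\eps>0}\Omega_{n-j}(\eps)$, that the family is increasing as $\eps$ decreases and that every compact subset of $\Omega^{j+1}$ is contained in some $\Omega_{n-j}(\eps)$ (via continuity of the spectrum: an $\eps$ that properly separates the spectrum at $\omega$ does so on a neighborhood), whence $\varinjlim_{\eps}H_{*}(\Omega_{n-j}(\eps))=H_{*}(\Omega^{j+1})$; Hardt triviality applied to $\{(\omega,\eps)\,|\,\ii^{-}(f(\omega)-\eps p)\leq n-j\}$ then shows the terms of this direct system are eventually constant, so the stable homology equals $H_{*}(\Omega^{j+1})$. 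Replacing your claimed set equality by this direct-limit-plus-stabilization argument repairs the proof; as written, the final step of your proposal would fail whenever $\Omega^{k+1}$ is not closed in $\Omega$.
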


\begin{propo}\label{union}For every positive definite form $p\in \Q(n+1)$  and for every $\eps>0$ sufficiently small
$$ H_*(\Omega_{n-k}(\eps))\simeq H_*(\Omega^{k+1})$$
where $\Omega_{n-k}(\eps)=\{\omega \in \Omega\,|\, \ii^-(f(\omega)-\eps p)\leq n-k\}.$
\begin{proof}
Let us first prove that $\Omega^{j+1}=\bigcup_{\eps>0}\Omega_{n-j}(\eps).$\\
Let $\omega \in \bigcup_{\eps>0}\Omega_{n-j}(\eps);$ then there exists $\overline{\eps}$ such that $\omega\in \Omega_{n-j}(\eps)$ for every $\eps<\overline{\eps}.$ Since for $\eps$ small enough $$\ii^{-}(f(\omega)-\eps p)=\ii^{-}(f(\omega))+\dim (\ker (f(\omega))$$ then it follows that $$\ii^{+}(f(\omega))=n+1-\ii^{-}(f(\omega))-\dim (\ker f(\omega))\geq j+1.$$ Viceversa if $\omega \in \Omega^{j+1}$ the previous inequality proves $\omega\in \Omega_{n-j}(\eps)$ for $\eps$ small enough, i.e. $\omega \in \bigcup_{\eps>0}\Omega_{n-j}(\eps).$\\
Notice now that if $\omega\in \Omega_{n-j}(\eps)$ then, eventually choosing a smaller $\eps$, we may assume $\eps$ properly separates the spectrum of $\omega$ and thus, by continuity of the map $f$, there exists $U$ open neighborhood of $\omega$ such that $\eps$ properly separates also the spectrum\footnote{The reader is advised to see \cite{Kato} for a detailed discussion of the regularity of the eigenvalues of a family of symmetric matrices} of $f(\eta)$ for every $\eta \in U$. Hence every $\eta \in U$ also belongs to $\Omega_{n-j}(\eps)$. From this consideration it easily follows that each compact set in $\Omega^{j+1}$ is contained in some $\Omega_{n-j}(\eps)$ and thus $$\varinjlim_{\eps}\{H_{*}(\Omega_{n-j}(\eps))\}=H_{*}(\Omega^{j+1}).$$ It remains to prove that the topology of $\Omega_{n-j}(\eps)$ is definitely stable in $\eps$ going to zero. Consider the semialgebraic compact set $S_{n-j}=\{(\omega, \eps)\in S^{k}\times [0, \infty)\, |\, \ii^{-}(f(\omega)-\eps p)\leq n-j\}$. By Hardt's triviality theorem we have that the projection $(\omega, \eps)\mapsto \omega$ is a locally trivial fibration over $(0,\eps)$ for $\eps$ small enough; from this the conclusion follows.
\end{proof}
\end{propo}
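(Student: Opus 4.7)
The plan is to establish the isomorphism through three ingredients: a set-theoretic identity expressing $\Omega^{k+1}$ as an increasing union of the $\Omega_{n-k}(\eps)$'s, a direct limit argument in homology, and a stabilization statement for small $\eps$.

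First I would verify the set-theoretic identity $\Omega^{k+1} = \bigcup_{\eps > 0} \Omega_{n-k}(\eps)$. The key linear-algebra fact is that if $A$ is a symmetric matrix with eigenvalues separated from zero except for a possible kernel, then subtracting a small positive definite perturbation $\eps p$ pushes all nonpositive eigenvalues to strictly negative ones, so that for sufficiently small $\eps$,
\[
\ii^-(f(\omega) - \eps p) = \ii^-(f(\omega)) + \dim\ker f(\omega).
\]
Combining with $\ii^+(q) + \ii^-(q) + \dim\ker(q) = n+1$ then gives $\omega \in \Omega^{k+1}$ if and only if $\omega \in \Omega_{n-k}(\eps)$ for all sufficiently small $\eps$, which is the desired identity.

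Next I would exhibit the $\Omega_{n-k}(\eps)$ as an exhaustion of $\Omega^{k+1}$ by closed subsets, nested as $\eps$ decreases. Using continuity of the eigenvalues of $f(\eta)$ in $\eta$ (the spectrum depending continuously on the symmetric matrix), I would show: once a given $\omega$ lies in $\Omega_{n-k}(\eps)$ with $\eps$ properly separating the spectrum of $f(\omega) - \eps p$, there is an open neighborhood of $\omega$ on which the same $\eps$ still separates the spectrum, hence the whole neighborhood lies in $\Omega_{n-k}(\eps)$. A straightforward compactness argument then shows that every compact subset of $\Omega^{k+1}$ is contained in some $\Omega_{n-k}(\eps)$. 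Together with continuity of singular homology under directed unions, this yields
\[
\varinjlim_{\eps\to 0^+} H_*(\Omega_{n-k}(\eps)) \simeq H_*(\Omega^{k+1}).
\]

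Finally I would upgrade this colimit statement to an honest isomorphism $H_*(\Omega_{n-k}(\eps)) \simeq H_*(\Omega^{k+1})$ for all sufficiently small $\eps$. For this I would apply Hardt's semialgebraic triviality theorem to the compact semialgebraic set
\[
S_{n-k} = \{(\omega,\eps) \in S^1 \times [0,\infty) \,:\, \ii^-(f(\omega)-\eps p) \leq n-k\}
\]
projected onto the $\eps$-coordinate: triviality over some interval $(0,\bar\eps)$ forces the fibers $\Omega_{n-k}(\eps)$, $0 < \eps < \bar\eps$, to be pairwise homeomorphic via inclusion-compatible trivializations, so the directed system $\{H_*(\Omega_{n-k}(\eps))\}_\eps$ eventually consists of isomorphisms and its colimit equals each term.

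The main obstacle I expect is the stabilization step. The topological type of $\Omega_{n-k}(\eps)$ is controlled by the index function $\ii^-$ of a family of symmetric matrices, and $\ii^-$ has jumps precisely at the degenerate loci of $f(\omega) - \eps p$; one must ensure that these jumps accumulate only at $\eps = 0$, which is exactly the content of Hardt triviality applied to the right semialgebraic family. The compatibility between the trivialization and the inclusions $\Omega_{n-k}(\eps) \hookrightarrow \Omega_{n-k}(\eps')$ for $\eps > \eps'$ is what guarantees that the isomorphism with the colimit descends to each level.
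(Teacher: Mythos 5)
Your proposal follows the paper's own proof essentially step for step: the same linear-algebra identity $\ii^-(f(\omega)-\eps p)=\ii^-(f(\omega))+\dim\ker f(\omega)$ to get $\Omega^{k+1}=\bigcup_{\eps>0}\Omega_{n-k}(\eps)$, the same spectrum-separation/compactness argument giving $\varinjlim_\eps H_*(\Omega_{n-k}(\eps))\simeq H_*(\Omega^{k+1})$, and the same application of Hardt's triviality to the semialgebraic family $S_{n-k}$ to stabilize the topology for small $\eps$ (you even correctly read the projection as the one onto the $\eps$-coordinate). The approach is correct and matches the paper.
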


\section{Formulae for the Betti numbers}
We start by proving the following.
\begin{teo}\label{projective}
$b_{k}(\RP^{n}\backslash X)=b_{0}(\Omega^{k+1})+b_{1}(\Omega^{k})$ for every $k\in\N.$
\begin{proof} 
Consider the map $f:\Omega\to \Q(n+1)$ defined by $\omega\mapsto \omega q$ and the map $F:\Omega \times \RP^n \to \R$ defined as in (\ref{defF}). The projection $p_2:\Omega\times \RP^n\to \RP^n$ on the second factor restricts to a homotopy equivalence $$p_2|_{B}:B\simeq \RP^n\backslash X$$
(the image of $p_2|_{B}$ is $\RP^n\backslash X$ because $K^{\circ \circ}=K$ and it is a homotopy equivalence because the fibers are contractible). The result follows from lemma \ref{cooB}.
\end{proof}
\end{teo}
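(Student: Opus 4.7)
The plan is to realize the complement $\RP^n\setminus X$ as (homotopy equivalent to) the open set $B$ from Section~3, specialized to the map $f(\omega)=\omega q$, and then invoke Lemma~\ref{cooB} directly. Passing from cohomology to Betti numbers is automatic with $\Z_2$ coefficients.

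First I would instantiate the general construction with $f:\Omega\to \Q(n+1)$, $\omega\mapsto \omega q$, so that
$$F(\omega,[x])=(\omega q)(x)=\omega(q(x))$$
is well defined on $\Omega\times\RP^n$, and $B=\{F>0\}\subset \Omega\times\RP^n$. Then I would analyze the second projection $p_2:B\to\RP^n$, showing two things: (i) its image equals $\RP^n\setminus X$ and (ii) every nonempty fiber is contractible. For (i), a point $[x]$ lies in the image of $p_2|_B$ iff there exists $\omega\in\Omega=K^\circ\cap S^1$ with $\omega(q(x))>0$; by the bipolar identity $K^{\circ\circ}=K$ (valid since $K$ is closed and convex), this fails precisely when $q(x)\in K$, i.e.\ $[x]\in X$. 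For (ii), given $[x]\notin X$, the fiber $p_2^{-1}([x])=\{\omega\in\Omega:\omega(q(x))>0\}$ is the intersection of $\Omega$, an arc in $S^1$ (possibly all of $S^1$), with the open half-circle $\{\omega:\omega(q(x))>0\}$; this is an open arc, hence contractible.

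From here I would conclude that $p_2|_B:B\to \RP^n\setminus X$ is a homotopy equivalence. Since everything in sight is semialgebraic, Hardt triviality gives a finite semialgebraic partition of $\RP^n\setminus X$ over which $p_2|_B$ is trivial with contractible fibers; gluing these local trivializations (or invoking Vietoris--Begle, using that source and target are CW, and that the map is proper on each stratum) yields the homotopy equivalence. Hence $b_k(\RP^n\setminus X)=b_k(B)$. Lemma~\ref{cooB} then computes $H^k(B)\cong H_0(\Omega^{k+1})\oplus H_1(\Omega^k)$, and because we work with field coefficients $\Z_2$, dimensions of cohomology and homology coincide, giving the claimed formula.

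The only non-formal step is confirming that $p_2|_B$ is genuinely a homotopy equivalence and not merely a weak one; the main obstacle is therefore to justify this passage cleanly. I expect that the smoothest route, consistent with the style of the preliminary lemmas, is to exhibit an explicit deformation of $B$ onto a section of $p_2|_B$: over each piece of a Hardt stratification of $\RP^n\setminus X$, pick a continuously varying $\omega(x)\in p_2^{-1}([x])$ (a midpoint of the fiber arc), and use convex combination along the fiber arcs to deform $B$ onto the graph of this section; local choices can be glued because fibers are convex arcs and the partition is finite. Once this is in place, the statement follows by combining the identification of the target, the homotopy equivalence, and Lemma~\ref{cooB}.
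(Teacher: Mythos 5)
Your proposal is correct and follows essentially the same route as the paper: identify $\RP^n\setminus X$ with the image of $p_2|_B$ using $K^{\circ\circ}=K$, note the fibers are contractible arcs so that $p_2|_B$ is a homotopy equivalence, and conclude via Lemma~\ref{cooB}. The paper states these steps tersely; your added justification of the fiber contractibility and of the passage from contractible fibers to a homotopy equivalence (via semialgebraic triviality) only fills in details the paper leaves implicit.
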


To compute the homology of $X$ we need to know the map induced by the inclusion $c:\RP^{n}\backslash X\to\RP^{n}$ on the cohomology.

\begin{propo}
Set $\mu=\max_{\omega \in\Omega}\ii^{+}(\omega).$ Then for $k\leq \mu-1$
$$H^{k}(\RP^{n})\stackrel{c^{*}}{\rightarrow}H^{k}(\RP^{n}\backslash X)$$
is injective and for $k\geq \mu+1$ is zero.
\end{propo}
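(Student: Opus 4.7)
The plan is to handle the two claims by quite different arguments, and I expect injectivity to be the only one requiring any real geometric construction.

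For the vanishing range $k\geq \mu+1$ I would simply appeal to Theorem \ref{projective}: by the definition of $\mu$ one has $\Omega^{k+1}\subseteq \Omega^{\mu+1}=\emptyset$ and, as soon as $k\geq \mu+1$, also $\Omega^{k}=\emptyset$. Therefore $b_{k}(\RP^{n}\setminus X)=b_{0}(\Omega^{k+1})+b_{1}(\Omega^{k})=0$ in this range, and $c^{*}$ is automatically the zero map. No geometric information about the inclusion itself is needed.

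For the injectivity range $k\leq \mu-1$ my plan is to exhibit a linearly embedded $\RP^{\mu-1}$ sitting inside $\RP^{n}\setminus X$. The natural candidate is $P=\mathbb{P}(P^{+}(\omega_{0} q))$, where $\omega_{0}\in \Omega$ is any point realizing $\ii^{+}(\omega_{0} q)=\mu$. The key check is that for $[x]\in P$ one has $(\omega_{0} q)(x)>0$, and since $\omega_{0}\in \Omega \subset K^{\circ}$ this forces $q(x)\notin K$, i.e.\ $[x]\notin X$. Hence $P\simeq \RP^{\mu-1}$ lies inside $\RP^{n}\setminus X$.

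Once $P$ has been placed inside $\RP^{n}\setminus X$, I would factor the inclusion $j:P\hookrightarrow \RP^{n}$ as $P\hookrightarrow \RP^{n}\setminus X\stackrel{c}{\hookrightarrow}\RP^{n}$, so that $j^{*}$ factors through $c^{*}$. Since $j$ is a linear embedding of real projective spaces, $j^{*}$ is an isomorphism on $H^{k}(\,\cdot\,;\Z_{2})$ for every $k\leq \mu-1$; consequently $c^{*}$ must be injective in that range, which is the claim. I do not foresee a serious obstacle; the subtlest point is just the polar duality statement $\omega_{0}\in K^{\circ}\Rightarrow q(x)\notin K$, which is a one-line unwinding of the definition of $\Omega=K^{\circ}\cap S^{1}$.
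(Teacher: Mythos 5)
Your proposal is correct and is essentially the paper's own argument: the vanishing for $k\geq \mu+1$ is read off from Theorem~\ref{projective} exactly as in the paper (both $\Omega^{k+1}$ and $\Omega^{k}$ are empty there), and for injectivity the paper likewise factors the restriction map to a projective space spanned by a positive eigenspace of some $\eta q$ with $\ii^{+}(\eta q)\geq k+1$ through the cohomology of the complement, only it realizes that projective space as a deformation retract of a fiber of $B\subset \Omega\times\RP^{n}$ and uses the homotopy equivalence $p_{2}|_{B}:B\simeq \RP^{n}\setminus X$. Your direct embedding of $\P(P^{+}(\omega_{0}q))$ into $\RP^{n}\setminus X$ via the polar-cone observation, followed by the factorization of $j^{*}$ through $c^{*}$ and the fact that a linear $\RP^{\mu-1}\subset\RP^{n}$ induces isomorphisms on $H^{k}(\,\cdot\,;\Z_{2})$ for $k\leq\mu-1$, is a harmless streamlining of the same idea.
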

Notice that the case $k=\mu$ is excluded from this statement: it deserves a special treatment.
\begin{proof}
Consider the commutative diagram of maps
$$\begin{tikzpicture}[xscale=2.5, yscale=2]

    \node (A0_0) at (0, 0) {$\RP^{n}\backslash X$};
    \node (A1_0) at (1, 0) {$ \RP^{n}$};
    \node (A0_1) at (0, 1) {$B$};
    \node (A1_1) at (1, 1) {$\Omega \times \RP^{n}$};
    \path (A0_0) edge [->] node [auto] {$c$} (A1_0);
    \path (A0_1) edge [->] node [auto,swap ] {$p_{2|_{B}}$} (A0_0);
    \path (A0_1) edge [->] node [auto] {$i$} (A1_1);
    \path (A1_1) edge [->] node [auto] {$p_{2}$} (A1_0);
      \end{tikzpicture}
$$
Since $p_{2}|_{B}$ is a homotopy equivalence, then $c^{*}=i^{*}\circ p_{2}^{*}.$ If $k\leq \mu-1,$ then $\Omega^{k+1}\neq \emptyset;$ thus let $\eta \in\Omega^{k+1}.$ Then $p_{1}^{-1}(\eta)\cap B$ deformation retracts to $\{\eta\}\times P^{d_{\eta}},$ where $P^{d_{\eta}}$ is a projective space of dimension $d_{\eta}=\ii^{+}(\eta)-1\geq k;$ in particular the inclusion $P^{d_{\eta}}\stackrel{i_{\eta}}{\rightarrow}\RP^{n}$ induces an isomorphism on the $k$-th cohomology group. The following factorization of $i_{\eta}^{*}$ concludes the proof of the first part (all the maps are the natural ones): 
$$\begin{tikzpicture}[xscale=2.5, yscale=2]

    \node (A0_0) at (0, 0) {$H^{k}(\Omega \times \RP^{n})$};
    \node (A1_0) at (1, 0) {$H^{k}(\overline{B})$};
    \node (A0_1) at (0, 1) {$H^{k}(\RP^{n})$};
    \node (A1_1) at (1, 1) {$H^{k}(P^{d_{\eta}})$};
    \path (A0_0) edge [->] node [auto] {$$} (A1_0);
    \path (A0_1) edge [->] node [auto,swap ] {$$} (A0_0);
    \path (A0_1) edge [->] node [auto] {$i_{\eta}^{*}$} (A1_1);
    \path (A1_0) edge [->] node [auto] {$$} (A1_1);
      \end{tikzpicture}
$$
For the second statement simply observe that for $k\geq \mu+1$ we have $\Omega^{k}=\emptyset$ and thus 
$$H^{k}(\RP^{n}\backslash X)\simeq H^{0}(\Omega^{k+1})\oplus H^{1}(\Omega^{k})=0.$$
\end{proof}
It remains to study $H^{\mu}(\RP^{n}\backslash X)\to H^{\mu}(\RP^{n}).$ For this purpose we introduce the bundle $L_{\mu}\to \Omega^{\mu}$ whose fiber at the point $\eta\in \Omega^{\mu}$ equals $\textrm{span}\{x\in \R^{n+1} \, |\, \exists \lambda >0 \,\textrm{ s.t.}\, (\eta Q)x=\lambda x\}$ and whose vector bundle structure is given by its inclusion in $\Omega^{\mu}\times \R^{n+1}.$ We let $w_{1,\mu}\in H^{1}(\Omega^{\mu})$ be the first Stiefel-Whitney class of $L_{\mu}.$ We have the following result.

\begin{propo} $\textrm{rk}(c^{*})_{\mu}=0\quad \iff \quad w_{1,\mu}=0.$
\end{propo}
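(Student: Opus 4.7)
The plan is to compute $c^{*}(\alpha^{\mu})$ (with $\alpha\in H^{1}(\RP^{n})$ the generator and $\alpha^{\mu}$ generating $H^{\mu}(\RP^{n})$) and to show that, under a natural identification of $H^{\mu}(\RP^{n}\setminus X)$ with $H^{1}(\Omega^{\mu})$, this class equals $w_{1,\mu}$. I start by using the commutative diagram of the previous proposition together with the fact that $p_{2}|_{B}$ is a homotopy equivalence to identify $H^{*}(\RP^{n}\setminus X)\simeq H^{*}(B)$ and transport $c^{*}$ to $i^{*}\circ p_{2}^{*}$; computing $c^{*}(\alpha^{\mu})$ then reduces to computing $i^{*}(1\otimes \alpha^{\mu})\in H^{\mu}(B)$. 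Since $\mu=\max_{\Omega}\ii^{+}$ forces $\Omega^{\mu+1}=\emptyset$, Lemma \ref{cooB} gives $H^{\mu}(B)\simeq H^{1}(\Omega^{\mu})$.

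Next I localise everything to the projective bundle $\P(L_{\mu})\to \Omega^{\mu}$, which sits inside $B$ as the deformation retract of $p_{1}^{-1}(\Omega^{\mu})\cap B$. The composition $\P(L_{\mu})\hookrightarrow B\stackrel{i}{\hookrightarrow}\Omega\times \RP^{n}\stackrel{p_{2}}{\to}\RP^{n}$, $(\eta,[x])\mapsto [x]$, pulls the tautological line bundle of $\RP^{n}$ back to the tautological line bundle over $\P(L_{\mu})$, so the restriction of $c^{*}(\alpha^{\mu})$ to $\P(L_{\mu})$ equals $u^{\mu}$, where $u\in H^{1}(\P(L_{\mu}))$ is the first Stiefel-Whitney class of that tautological line. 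The standard projective bundle relation gives
\begin{equation*}
u^{\mu}=\sum_{i=1}^{\mu}w_{i}(L_{\mu})\,u^{\mu-i}.
\end{equation*}
Because $\Omega^{\mu}$ is a semialgebraic subset of $S^{1}$, $H^{i}(\Omega^{\mu};\Z_{2})=0$ for $i\geq 2$, hence $w_{i}(L_{\mu})=0$ for every $i\geq 2$, and the relation collapses to $u^{\mu}=w_{1,\mu}\cdot u^{\mu-1}$. By Leray-Hirsch the only summand of $H^{\mu}(\P(L_{\mu}))$ that survives in degree $\mu$ is $H^{1}(\Omega^{\mu})\cdot u^{\mu-1}\simeq H^{1}(\Omega^{\mu})$, so $u^{\mu}$ represents $w_{1,\mu}$ under this identification.

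The main obstacle is to check that the restriction map $H^{\mu}(B)\to H^{\mu}(\P(L_{\mu}))$ is an isomorphism compatible with both identifications with $H^{1}(\Omega^{\mu})$. I would establish this via Mayer--Vietoris applied to the closed decomposition $B=(p_{1}^{-1}(\Omega^{\mu})\cap B)\cup (p_{1}^{-1}(\overline{\Omega\setminus \Omega^{\mu}})\cap B)$: on the second piece every fibre has dimension strictly less than $\mu-1$ and the base has cohomology only in degrees $0$ and $1$, so $H^{\mu}$ vanishes there; the intersection is a finite disjoint union of copies of $\RP^{\mu-1}$, whose $H^{\mu}$ also vanishes. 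Alternatively, one can read the compatibility directly off the cellular model introduced in the proof of Lemma \ref{one}: the $\mu$-cells of $B$ are in bijection with the arcs of $\Omega^{\mu}$, and the boundary relation $\partial e^{\mu}_{\alpha\beta}=e^{\mu-1}_{\alpha}+e^{\mu-1}_{\beta}$ matches the cellular complex computing $H_{1}(\Omega^{\mu})$. Once this compatibility is in place, the equivalence $c^{*}(\alpha^{\mu})=0\iff w_{1,\mu}=0$ follows at once.
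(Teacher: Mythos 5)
Your central computation is the one the paper itself uses: pull the hyperplane class back to the projectivized positive eigenbundle $P(L_{\mu})$, observe that the projective bundle relation collapses to $u^{\mu}=w_{1,\mu}u^{\mu-1}$ because the base is at most one--dimensional, and read off the answer with Leray--Hirsch. This already gives one implication with no further work: if $w_{1,\mu}\neq 0$ then the restriction of $c^{*}(\alpha^{\mu})$ to $P(L_{\mu})$ is nonzero, hence $\textrm{rk}(c^{*})_{\mu}=1$. The gap is in the converse, which is exactly your ``main obstacle'': you need injectivity of the restriction $H^{\mu}(B)\to H^{\mu}(P(L_{\mu}))$, and neither of your proposed justifications works as written.

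Concretely: $\Omega^{\mu}$ is \emph{open} in $\Omega$, so $\Omega\setminus\Omega^{\mu}$ is already closed and your two pieces $p_{1}^{-1}(\Omega^{\mu})\cap B$ and $p_{1}^{-1}(\overline{\Omega\setminus\Omega^{\mu}})\cap B$ are disjoint, one open and one closed --- not a Mayer--Vietoris cover (their intersection is empty, not a union of projective spaces). If you instead take the closed cover given by $p_{1}^{-1}(\textrm{Cl}(\Omega^{\mu}))\cap B$ and $p_{1}^{-1}(\Omega\setminus\Omega^{\mu})\cap B$, the intersection sits over the finitely many points of $\partial\Omega^{\mu}$, where by openness of $\Omega^{\mu}$ one has $\ii^{+}\leq \mu-1$; the fibres there retract to projective spaces of dimension at most $\mu-2$, \emph{not} to $\RP^{\mu-1}$ as you assert. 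This matters, because for injectivity at $H^{\mu}(B)$ the relevant vanishing is that of $H^{\mu-1}$ of the intersection (to kill the Mayer--Vietoris connecting homomorphism), not of its $H^{\mu}$: with your claimed $\RP^{\mu-1}$ fibres the argument would fail, while with the correct fibres it goes through. Even then, the piece over $\textrm{Cl}(\Omega^{\mu})$ is not $P(L_{\mu})$ on the nose (it carries extra material over $\partial\Omega^{\mu}$), so a further comparison is needed; your cellular alternative is similarly imprecise, since the $\mu$-cells of the model include cells over points with $a\geq\mu$ and over arcs where $a\geq\mu-1$, not only ``arcs of $\Omega^{\mu}$''. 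All of this is repairable, but the paper's case split makes it unnecessary: if $\Omega^{\mu}\neq S^{1}$ then $H^{1}(\Omega^{\mu})=0$, so $w_{1,\mu}=0$ and also $H^{\mu}(\RP^{n}\setminus X)\simeq H_{0}(\Omega^{\mu+1})\oplus H_{1}(\Omega^{\mu})=0$, and both sides of the equivalence hold trivially; if $\Omega^{\mu}=S^{1}$ then $\ii^{+}\equiv\mu$ and $P(L_{\mu})\hookrightarrow B$ is a homotopy equivalence, so restriction is an isomorphism and your Leray--Hirsch computation finishes the proof. I recommend either adopting this dichotomy or writing out in full the corrected Mayer--Vietoris argument together with the comparison between the closure piece and $P(L_{\mu})$.
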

\begin{proof}
In the case $\Omega^{\mu}\neq S^{1},$ then clearly $w_{1,\mu}$ is zero and also $\textrm{rk}(c^{*})_{\mu}$ is zero since $H^{\mu}(\RP^{n}\backslash X)=0.$ If $\Omega^{\mu}=S^{1},$ then $\ii^{+}$ is constant and we consider the projectivization $P(L_{\mu})$ of the bundle $L_{\mu}.$ In this case it is easily seen that the inclusion
$$P(L_{\mu})\stackrel{\lambda}{\hookrightarrow}B$$
is a homotopy equivalence and, since $\textrm{rk}(c^{*})=\textrm{rk}(i^{*}\circ p_{2}^{*})$ we have $\textrm{rk}(c^{*})=\textrm{rk}(\lambda^{*}\circ i^{*}\circ p_{2}^{*}).$ Let us call $l$ the map $p_{2}\circ i \circ \lambda;$ then $l:P(L_{\mu})\to \RP^{n}$ is a map which is linear on the fibres and if $y\in H^{1}(\RP^{n})$ is the generator, we have by Leray-Hirsch
$$H^{*}(P(L_{\mu}))\simeq H^{*}(S^{1})\otimes\{1,l^{*}y,\ldots, l^{*}y^{\mu-1}\}.$$
By the Whitney formula we get
$$l^{*}y^{\mu}=w_{1,\mu}(l^{*}y)^{\mu-1}$$
which proves $(c^{*})_{\mu}$ is zero iff $w_{1,\mu}=0.$
\end{proof}

Collecting together Theorem \ref{projective} and the previous two propositions allows us to split the long exact sequence of the pair $(\RP^{n},\RP^{n}\backslash X)$ and, since $H_{*}(X)\simeq H^{n-*}(\RP^{n}, \RP^{n}\backslash X),$ to compute the Betti numbers of $X.$\\
We first define the table $E=(e^{i,j})_{i,j\in \Z}$ with $e^{i,j}\in \N,$ and whose nonzero part $E'=\{e^{i,j}\, |\,0\leq i\leq 2,\, 0\leq j\leq n\}$ is the following table:
$$
E'=\begin{array}{|c|c|c|}
\hline
1&0&0\\
\hline
\vdots&\vdots&\vdots\\
\hline
1&0&0\\

\hline
c&0&0\\
\hline
0& b_{0}(\Omega^{\mu})-1&d\\
\hline
\vdots&\vdots&\vdots\\

\hline
0&b_{0}(\Omega^{1})-1&b_{1}(\Omega^{1})\\
\hline
\end{array}
$$
 where  $c=e^{0,\mu}$ and we have $(c,d)=(1,b_{1}(\Omega^{\mu}))$ if $w_{1,\mu}=0$ and $(c,d)=(0,0)$ otherwise.
\begin{teo} \label{projgen}If $\mu=n+1$ then $X$ is empty; in the contrary case for every $k\in \Z$ the following formula holds:
$$b_{k}(X)=e^{0,n-k}+e^{1,n-k-1}+e^{2,n-k-2}.$$
Moreover if $i:X\to \RP^{n}$ is the inclusion map and $i_{*}$ is the map induced on homology, then
$$e^{0,n-k}=\emph{\textrm{rk}}(i_{*})_{k}.$$
\end{teo}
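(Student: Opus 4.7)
The plan is to combine the long exact sequence of the pair $(\RP^n,\RP^n\setminus X)$, Poincar\'e--Lefschetz duality $H_{n-k}(X)\simeq H^k(\RP^n,\RP^n\setminus X)$ with $\Z_2$ coefficients (which applies because $X$ is a compact semialgebraic, hence ANR, subset of the closed manifold $\RP^n$), Theorem \ref{projective}, and the information on $c^*:H^*(\RP^n)\to H^*(\RP^n\setminus X)$ supplied by the two preceding propositions.

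First I would dispose of the case $\mu=n+1$. If some $\omega\in\Omega$ has $\ii^+(\omega q)=n+1$, then $\omega q$ is positive definite, so $(\omega q)(x)>0$ for every $x\neq 0$; but $\omega\in K^\circ$ together with $q(x)\in K$ force $(\omega q)(x)\leq 0$, so $X=\emptyset$.

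Assuming $\mu\leq n$, the pair sequence splits over $\Z_2$ into short exact pieces
$$0\to\operatorname{coker}(c^*)_{k-1}\to H^k(\RP^n,\RP^n\setminus X)\to\ker(c^*)_k\to 0,$$
yielding $b^k(\mathrm{pair})=b_{k-1}(\RP^n\setminus X)+1-\mathrm{rk}(c^*)_{k-1}-\mathrm{rk}(c^*)_k$ in the range $0\leq k\leq n$ (with boundary terms vanishing outside). The two preceding propositions determine $\mathrm{rk}(c^*)_k$ completely: it is $1$ for $k\leq\mu-1$, it is $0$ for $k\geq\mu+1$, and it equals $1-c$ at $k=\mu$. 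The subtle ingredient is to locate the image of $c^*$ inside the splitting $H^k(\RP^n\setminus X)\simeq H_0(\Omega^{k+1})\oplus H_1(\Omega^k)$ provided by Lemma \ref{cooB}: the proof that $c^*$ is injective in low degrees factors through the restriction to a fibre $P^{d_\eta}\hookrightarrow\RP^n$ for some $\eta\in\Omega^{k+1}$, so $\mathrm{im}(c^*)$ lies entirely in the $H_0(\Omega^{k+1})$ summand and kills exactly one of its generators. Feeding this localization into the formula above reproduces, column by column, the entries of the table $E'$: the first column records $1-\mathrm{rk}(c^*)_{n-k}$, the second records the depleted piece $b_0(\Omega^{j+1})-1$, and the third records $b_1(\Omega^{j+1})$, with the critical row $j=\mu$ adjusted by the parameters $(c,d)$.

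For the \emph{moreover} part, Poincar\'e--Lefschetz duality identifies $i_*:H_k(X)\to H_k(\RP^n)$ with the map $H^{n-k}(\RP^n,\RP^n\setminus X)\to H^{n-k}(\RP^n)$ in the pair sequence, whose image is $\ker(c^*)_{n-k}$; hence $\mathrm{rk}(i_*)_k=1-\mathrm{rk}(c^*)_{n-k}$ for $0\leq k\leq n$, and this matches the first column of $E'$ verbatim. The main obstacle I expect is the careful bookkeeping at the critical row $j=\mu$: one must use the observation that $w_{1,\mu}\neq 0$ can only occur when $\Omega^\mu=S^1$, in which case automatically $b_1(\Omega^\mu)=1$. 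This is precisely what makes the LES-derived ranks $b^\mu(\mathrm{pair})$ and $b^{\mu+1}(\mathrm{pair})$ reassemble into the table values $(c,d)$ consistently, whether or not the Stiefel--Whitney class vanishes.
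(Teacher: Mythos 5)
Your argument is essentially the paper's own: the paper's (very terse) proof likewise splits the long exact sequence of the pair $(\RP^n,\RP^n\setminus X)$ using Theorem \ref{projective} and the two propositions computing $\mathrm{rk}(c^*)$, invokes the duality $H_*(X)\simeq H^{n-*}(\RP^n,\RP^n\setminus X)$, and gets the ``moreover'' part from $b_{n-k}(\RP^n)=\mathrm{rk}(c^*)_{n-k}+\mathrm{rk}(i_*)_k$, exactly as you do. Your bookkeeping is correct (in particular the key observation that $w_{1,\mu}\neq 0$ forces $\Omega^{\mu}=S^1$ and hence $b_1(\Omega^{\mu})=1$); the only superfluous step is locating $\mathrm{im}(c^*)$ inside the splitting of Lemma \ref{cooB}, since the rank count alone already yields all the entries of the table.
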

The last statement follows from the formula
$$b_{n-k}(\RP^{n})=\textrm{rk}(c^{*})_{n-k}+\textrm{rk}(j_{*})_{k}.$$
\begin{example}[The complex squaring]
 Consider the quadratic forms
       $$q_{0}(x)=x_{0}^{2}-x_{1}^{2},\quad q_{1}(x)=2x_{0}x_{1}.$$ 
 Identifying $\R^{2}$ with $\C$ via $(x_{0},x_{1})\mapsto x_{0}+ix_{1},$ the map $q=(q_{0},q_{1})$ is the complex squaring $z\mapsto z^{2}.$ We
easily see that the common zero locus set of $q_{0}$ and $q_{1}$ in
$\RP^{1}$ is empty. The image of the linear map $\overline q:\R^{2}\to \Q(2)$ defined by $\eta\mapsto \eta q$ consists of
a plane intersecting the set of degenerate forms $Z$ only at the
origin; we identify $\Q(2)$ with the space of $2\times2$ real symmetric
matrices. Thus $\overline q(S^{1})$ is a circle looping around $Z=\{\det =0\}$ and
the index function is constant:
       $$\ii^{+}(\omega q)=1,\quad \omega \in S^{1}.$$
Thus $\Omega^{1}=S^{1};$  and the table $E$ in this case has the
following picture:
$$E=
\begin{array}{|c|c|c}

c&0&0\\
0&0&d\\
\hline
\end{array}
$$
On the other hand the first Stiefel-Whitney class of the bundle $L_{1}\to \Omega^{1}$ in this case is nonzero; hence in this case $(c,d)=(0,0)$ and we have that $b_{k}(X)=0$ for every $k$, as confirmed from the fact that $X=\emptyset.$
\end{example}

Alternatively we could give a direct proof of the previous theorems using Theorems A, B and C of \cite{AgLe}: the reader should recognize in the previous table the structure of some spectral sequence.\\
The previous theorem raises the question: when do we have $w_{1,\mu}\neq0$? Since $\mu=\max \ii^{+},$ then clearly $\Omega=S^{1}$ and $\ii^{+}\equiv \mu.$ Moreover since $\mu=\ii^{+}(\eta)=n+1-\textrm{ker}(\eta Q)-\ii^{+}(-\eta)=n+1-\textrm{ker}(\eta Q)-\mu$ it follows $\mu\leq [\frac{n+1}{2}].$\\
It is interesting to classify pairs of quadratic forms $(q_{0},q_{1})$ such that $\ii^{+}$ is constant; this classification follows from a general theorem on the classification up to congruence of pairs of real symmetric matrices (see \cite{Th}).\\
The formula for the Betti numbers of the spherical part $Y$ is provided in the Appendix 1.

\section{Classical applications}

We discuss here some applications of the previous results; the reader is referred to \cite{Barvinok} for a detailed treatment using different techniques. We start with the following theorem, proved by Calabi in \cite{Calabi}.

\begin{teo}[Calabi] Let $q_{0},q_{1}$ be real quadratic forms over $\R^{n+1}$ with $n+1\geq 3.$ If the only $x\in \R^{n+1}$ satisfying $q_{0}(x)=q_{1}(x)=0$ is $x=0,$ then there exists a real linear combination $\omega q_{0}+\omega_{1}q_{1}$ which is positive definite.
\end{teo}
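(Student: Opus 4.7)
The plan is to set up Theorem \ref{projective} in the case $K=\{0\}$, so that $K^{\circ}=\R^{2}$ and $\Omega=S^{1}$. In this setup $X$ is exactly the common projective zero locus $\{q_{0}=q_{1}=0\}$, and the conclusion of the theorem---existence of $\omega\in S^{1}$ for which $\omega q$ is positive definite---is equivalent to $\mu=n+1$, where $\mu=\max_{\omega\in S^{1}}\ii^{+}(\omega q)$. Thus everything reduces to ruling out $\mu\leq n$ from the hypothesis $X=\emptyset$.

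Under this hypothesis $\RP^{n}\setminus X=\RP^{n}$, so Theorem \ref{projective} specializes to
$$1=b_{k}(\RP^{n})=b_{0}(\Omega^{k+1})+b_{1}(\Omega^{k})\quad\text{for every }0\leq k\leq n.$$
I would argue by contradiction, assuming $\mu\leq n$. Taking $k=n$ in the identity kills the first summand (since $\Omega^{n+1}=\emptyset$) and forces $b_{1}(\Omega^{n})=1$. But $\Omega^{n}$ is an open semialgebraic subset of $S^{1}$, and hence is either a finite disjoint union of proper open arcs or the whole circle; its first Betti number is $1$ only in the latter case. Therefore $\Omega^{n}=S^{1}$, i.e.\ $\ii^{+}(\omega q)\geq n$ for every $\omega\in S^{1}$.

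To finish, I would use $\ii^{-}(\omega q)=\ii^{+}(-\omega q)$: applying the previous inequality at both $\omega$ and $-\omega$ yields $\ii^{+}(\omega q)+\ii^{-}(\omega q)\geq 2n$ for every $\omega$. Combined with the standard relation $\ii^{+}+\ii^{-}+\dim\ker=n+1$, this forces $2n\leq n+1$, i.e.\ $n\leq 1$, contradicting the standing hypothesis $n+1\geq 3$. Hence $\mu=n+1$, producing the desired positive-definite combination.

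There is no real obstacle here: once Theorem \ref{projective} is in hand, Calabi's theorem reduces to reading off the top Betti number. The dimension hypothesis $n+1\geq 3$ enters precisely at the final parity step, and the example of the complex squaring discussed just before the theorem shows that this hypothesis cannot be dropped.
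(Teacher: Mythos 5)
Your argument is correct, and it runs on the same engine as the paper's proof: apply Theorem \ref{projective} with $K=\{0\}$, use $X=\emptyset$ to identify $b_{n}(\RP^{n}\backslash X)$ with $b_{n}(\RP^{n})=1$, and conclude that $\Omega^{n+1}\neq\emptyset$, which is exactly the existence of a positive definite combination. The only genuine difference is how the unwanted term $b_{1}(\Omega^{n})$ is eliminated. The paper does this by a second application of the same formula one degree lower: if some $\Omega^{k}$ with $k\geq 2$ were all of $S^{1}$, then $b_{k-1}(\RP^{n}\backslash X)=b_{0}(\Omega^{k})+b_{1}(\Omega^{k-1})=2$ would contradict $b_{k-1}(\RP^{n})=1$; this gives the slightly more general byproduct that $b_{1}(\Omega^{k})=0$ for every $k\geq 2$ whenever $X=\emptyset$. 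You instead deduce from $b_{1}(\Omega^{n})=1$ that $\Omega^{n}=S^{1}$ and then close with the purely linear-algebraic antipodal relation $\ii^{+}(\omega q)+\ii^{+}(-\omega q)\leq n+1$, which forces $n\leq 1$ and contradicts $n+1\geq 3$; this keeps the topological input to a single degree ($k=n$) and makes transparent where the dimension hypothesis enters, exactly as in the paper (and, as you note, the complex squaring example shows it is necessary). Both routes are equally rigorous; the paper's variant trades your elementary finish for a small extra structural fact about the sets $\Omega^{k}$, while yours is marginally more self-contained at that step.
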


\begin{proof}The hypothesis is equivalent to $n+1\geq 3$ and $X=\{[x] \in \RP^{n}\, |\, q_{0}(x)=0=q_{1}(x)\}=\emptyset$ and the thesis to $\Omega^{n+1}\neq \emptyset.$\\
First notice that for every $k\geq 2$ we have $b_{1}(\Omega^{k})=0:$ if it was the contrary, then $b_{0}(\Omega^{k})=1=b_{1}(\Omega^{k-1})$ and Theorem \ref{projective} would give $b_{k-1}(\RP^{n}\backslash X)=b_{k-1}(\RP^{n})=b_{0}(\Omega^{k})+b_{1}(\Omega^{k-1})=2,$ which is absurd. Thus if $n+1>2$ we have
$$1=b_{n}(\RP^{n})=b_{n}(\RP^{n}\backslash X)=b_{0}(\Omega^{n+1})+b_{1}(\Omega^{n})=b_{0}(\Omega^{n+1})$$
which implies $\Omega^{n+1}\neq \emptyset.$
\end{proof}

Thus the previous theorem states that for $n+1\geq 3$
$$X=\emptyset \Rightarrow \Omega^{n+1}\neq \emptyset.$$
Also the contrary is true, with no restriction on $n:$ if $X\neq \emptyset$ then $0=b_{n}(\RP^{n}\backslash X)=b_{0}(\Omega^{n+1})+b_{1}(\Omega^{n})$ which implies $\Omega^{n}\neq S^{1}$ and $\Omega^{n+1}=\emptyset.$
Thus we have the following corollary.
\begin{coro}\label{emptyness} If $n+1\geq 3,$ then $X=\emptyset \iff \Omega^{n+1}\neq \emptyset.$\end{coro}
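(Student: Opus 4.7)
The corollary packages two implications, and the strategy is to peel them apart and observe that only one of the two really needs an argument.

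The forward direction, $X=\emptyset \Rightarrow \Omega^{n+1}\neq\emptyset$, requires the hypothesis $n+1\geq 3$ and is literally the content of Calabi's theorem that has just been proved above, so no further work is needed there. The substantive content of the corollary is thus the reverse implication $\Omega^{n+1}\neq\emptyset \Rightarrow X=\emptyset$, which in fact holds without any restriction on $n$. I would derive it as a one-line consequence of Theorem \ref{projective} specialized to $k=n$. Concretely, suppose for contradiction that $X\neq\emptyset$; then $\RP^{n}\setminus X$ is a proper open subset of the connected compact $n$-manifold $\RP^{n}$, hence a non-compact $n$-manifold, and so its top $\Z_{2}$-Betti number vanishes: $b_{n}(\RP^{n}\setminus X)=0$. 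Plugging $k=n$ into the formula of Theorem \ref{projective} gives
$$0=b_{n}(\RP^{n}\setminus X)=b_{0}(\Omega^{n+1})+b_{1}(\Omega^{n}),$$
forcing $b_{0}(\Omega^{n+1})=0$, i.e.\ $\Omega^{n+1}=\emptyset$, contrary to the hypothesis.

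For transparency I would also record the completely elementary direct proof of the reverse implication that bypasses Theorem \ref{projective}: if $\omega\in\Omega^{n+1}$, then $\ii^{+}(\omega q)=n+1$, so $\omega q$ is positive definite on $\R^{n+1}$; but for any $[x]\in X$ one has $q(x)\in K$ and $\omega\in K^{\circ}$, whence $\omega q(x)=\omega\cdot q(x)\leq 0$, a contradiction for $x\neq 0$. This makes visible why no dimensional hypothesis is needed here. There is no real obstacle in the proof: the only point that deserves a sentence of care is the assertion $b_{n}(\RP^{n}\setminus X)=0$, which is just the standard fact that a proper open subset of a closed connected $n$-manifold has vanishing top-dimensional homology with any coefficients. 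The corollary is, in the end, a packaging of Calabi's theorem together with a trivial converse rather than an independent result.
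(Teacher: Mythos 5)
Your proof is correct and follows essentially the paper's own route: Calabi's theorem gives the forward implication $X=\emptyset \Rightarrow \Omega^{n+1}\neq\emptyset$ (where $n+1\geq 3$ is used), and the converse comes from Theorem \ref{projective} at $k=n$ together with the vanishing of $b_{n}(\RP^{n}\setminus X)$ when $X\neq\emptyset$, exactly as in the paper. The additional elementary argument (a positive definite $\omega q$ with $\omega\in K^{\circ}$ is incompatible with a nonzero $x$ satisfying $q(x)\in K$) is a correct bonus observation not present in the paper, but it does not alter the essential approach.
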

Using the previous we can prove the well known quadratic convexity theorem.
\begin{teo} If $n+1\geq 3$ and $q:\R^{n+1}\to \R^{2}$ is defined by $x\mapsto (q_{0}(x),q_{1}(x)),$ where $q_{0},q_{1}$ are real quadratic forms, then 
$$q(S^{n})\subset \R^{2}\quad \textrm{is a convex set}.$$
\begin{proof}
First observe that if $S^{n}=\{g(x)=1\}$ with $g$ quadratic form, then for a given $c=(c_{0},c_{1})$ we have $S^{n}\cap q^{-1}(c)\neq \emptyset$ iff $S^{n}\cap q_{c}^{-1}(0)\neq \emptyset$ iff $X(q_{c})=\emptyset,$ where $q_{c}$ is the quadratic map whose components are $(q_{0}-c_{0}g, q_{1}-c_{1}g)$ and $X(q_{c})=\{[x]\in\RP^{n}\, |\, q_{c}(x)=0\}.$ Thus by Corollary \ref{emptyness} we have $X(q_{c})\neq \emptyset$ iff $\Omega^{n+1}(q_{c})=\emptyset$ (here $n+1\geq3$).\\
Let now $a=(a_{0},a_{1})$ and $b=(b_{0},b_{1})$ be such that $X(q_{a})\neq\emptyset \neq X(q_{b})$ and suppose there exists $T\in [0,1]$ such that $aT+(1-T)b\notin q(S^{n}).$ Then by Corollary \ref{emptyness} there exists $\eta \in \R^{2}$ such that
$$\eta Q-\langle \eta, aT+(1-T)b\rangle I>0.$$
Assume $\langle \eta, a-b\rangle \geq 0,$ otherwise switch the role of $a$ and $b.$ We have $0<\eta Q -\langle \eta, aT+(1-T)b\rangle I=\eta Q+\langle \eta, T(b-a)\rangle I-\langle \eta, b\rangle I\leq \eta Q-\langle \eta, b\rangle I.$ Thus we got $\eta Q-\langle \eta, b\rangle I>0,$ which implies $\Omega^{n+1}(q_{b})\neq \emptyset,$ but this is impossible by corollary \ref{emptyness} since $X(q_{b})\neq \emptyset.$ Hence for every $t\in[0,1]$ we have $at+(1-t)b\in q(S^{n}).$
\end{proof}
\end{teo}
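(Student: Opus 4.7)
The plan is to translate the geometric convexity statement into the language of Corollary \ref{emptyness} and then argue by contradiction. Let $g(x) = \sum x_i^2$, so that $S^n = \{g = 1\}$, and for any $c = (c_0, c_1) \in \R^2$ introduce the pair
$$q_c = (q_0 - c_0 g,\ q_1 - c_1 g).$$
The first step is to observe that $c \in q(S^n)$ if and only if $X(q_c) := \{[x] \in \RP^n : q_c(x) = 0\}$ is nonempty: any projective zero of $q_c$ has $g(x) > 0$ (since $g$ is positive definite) and may therefore be rescaled to lie on $S^n$. Applying Corollary \ref{emptyness} (which uses $n+1 \geq 3$), this is in turn equivalent to $\Omega^{n+1}(q_c) = \emptyset$, i.e.\ to the nonexistence of a covector $\eta \in \R^2$ making $\eta Q - \langle \eta, c\rangle I > 0$ (here $Q = (Q_0, Q_1)$ is the matrix pair for $q$ and $I$ is the matrix of $g$).

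Next I would argue by contradiction: fix $a, b \in q(S^n)$ and a potential counterexample $c := Ta + (1-T)b$ with $T \in [0,1]$ and $c \notin q(S^n)$. The above translation gives $\Omega^{n+1}(q_c) \neq \emptyset$, so one produces $\eta \in \R^2$ with
$$\eta Q - \langle \eta, c\rangle I > 0.$$
After possibly swapping the roles of $a$ and $b$ I may assume $\langle \eta, a - b\rangle \geq 0$, in which case $T\langle \eta, a - b\rangle \geq 0$ and
$$\eta Q - \langle \eta, b\rangle I \;=\; \bigl(\eta Q - \langle \eta, c\rangle I\bigr) + T \langle \eta, a-b\rangle I \;>\; 0.$$
Therefore $\Omega^{n+1}(q_b) \neq \emptyset$; by Corollary \ref{emptyness} this forces $X(q_b) = \emptyset$, contradicting $b \in q(S^n)$.

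The main potential obstacle is the setup of the equivalence $c \in q(S^n) \iff X(q_c) \neq \emptyset$, which really relies on the positive-definiteness of $g$ to guarantee that a projective zero can be normalized onto the sphere; once that is in place, and once one orients $\eta$ so that the linear functional $\langle \eta, \cdot\rangle$ pushes $c$ towards one endpoint, the linearity of the map $c \mapsto \eta Q - \langle \eta, c\rangle I$ in $c$ does all the remaining work. Note that the hypothesis $n+1 \geq 3$ enters only through Corollary \ref{emptyness}, which is precisely where low-dimensional counterexamples (e.g.\ $q(z) = z^2$ on $\R^2 = \C$, as in the complex squaring example) are ruled out.
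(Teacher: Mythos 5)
Your proof is correct and follows essentially the same route as the paper: reduce membership $c\in q(S^{n})$ to nonemptiness of $X(q_{c})$, invoke Corollary \ref{emptyness} to extract $\eta$ with $\eta Q-\langle\eta,c\rangle I>0$ at a hypothetical gap point $c=Ta+(1-T)b$, and use linearity in $c$ (after normalizing the sign of $\langle\eta,a-b\rangle$) to contradict $b\in q(S^{n})$. The identity $\eta Q-\langle\eta,b\rangle I=(\eta Q-\langle\eta,c\rangle I)+T\langle\eta,a-b\rangle I$ is just a rearrangement of the paper's inequality chain, so there is nothing substantively different to flag.
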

The conclusions of the previous theorems are false if $n+1=2:$ pick $q_{0}(x,y)=x^{2}-y^{2}$ and $q_{1}(x,y)=2xy,$ then $q_{0}(x)=q_{1}(x)=0$ implies $x=0$ but any real linear combination of $q_{0}$ and $q_{1}$ is sign indefinite. Moreover $q(S^{1})=S^{1}$ which of course is not a convex subset of $\R^{2}.$

\begin{coro}\label{convex}If $q:\R^{n+1}\to \R^{2}$ has homogeneous quadratic components, then $q(\R^{n+1})$ is closed and convex.
\end{coro}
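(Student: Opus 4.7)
The plan is to reduce everything to the homogeneity of $q$ together with the quadratic convexity theorem already established. Since $q$ is homogeneous of degree two, $q(\lambda x)=\lambda^{2}q(x)$ for all $\lambda\in\R$, and therefore
$$q(\R^{n+1}) \;=\; \bigcup_{t\geq 0} t\cdot q(S^{n}),$$
i.e.\ $q(\R^{n+1})$ is exactly the cone from the origin over the spherical image $q(S^{n})$. This geometric description is the pivot of the whole argument.

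For the convexity, first I would invoke the preceding theorem (valid under the standing hypothesis $n+1\geq 3$) to conclude that $q(S^{n})$ is a compact convex subset of $\R^{2}$. Then I would verify the elementary fact that the cone from the origin over a convex set is convex: given $p_{i}=t_{i}v_{i}$ with $v_{i}\in q(S^{n})$, $t_{i}\geq 0$, a convex combination $\lambda p_{1}+(1-\lambda)p_{2}$ can be written as $s\cdot w$ with $s=\lambda t_{1}+(1-\lambda)t_{2}\geq 0$ and $w$ a convex combination of $v_{1}$ and $v_{2}$, so that $w\in q(S^{n})$; writing $s=r^{2}$ and using homogeneity, $sw = q(r\,\widetilde x)$ for a suitable preimage $\widetilde x$ of $w$, hence the combination lies in $q(\R^{n+1})$.

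For closedness I would analyse a convergent sequence $c_{k}=q(x_{k})\to c$. If $\{x_{k}\}$ is bounded, extracting a convergent subsequence and continuity of $q$ give $c=q(x)\in q(\R^{n+1})$. If $|x_{k}|\to\infty$, then the unit vectors $y_{k}=x_{k}/|x_{k}|$ satisfy $q(y_{k})=c_{k}/|x_{k}|^{2}\to 0$, so a subsequential limit $y\in S^{n}$ lies in the common zero set of $q_{0},q_{1}$. I would then exploit the quadratic identity $q(x_{k}+sy)=q(x_{k})+2sB(x_{k},y)$ (with $B$ the polar bilinear form, and using $q(y)=0$) to correct the unbounded sequence into a bounded one mapping to $c$, using the convexity already established to ensure the straight lines in $\R^{2}$ along which we move stay inside $q(\R^{n+1})$.

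The main obstacle is precisely this last step: certifying closedness when the origin lies on the boundary of $q(S^{n})$, i.e.\ when $q$ admits a nontrivial common real zero. Here one must rule out that the cone over $q(S^{n})$ fails to reach the boundary rays of its closure; the linear perturbation above, which is genuinely a quadratic (not merely convex) phenomenon, is what one needs to invoke, and spelling it out carefully is the delicate point of the proof.
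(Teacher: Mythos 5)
Your reduction of $q(\R^{n+1})$ to the positive cone over $q(S^{n})$, plus convexity of $q(S^{n})$ from the preceding theorem, is exactly the paper's own (one-line) argument, and your convexity half is complete and correct for $n+1\geq 3$. The genuine gap is the closedness step, which you leave unfinished --- and your instinct that it is the delicate point is better than you may realize: as stated, it cannot be completed, because the positive cone over a compact convex set touching the origin need not be closed, and indeed $q(\R^{n+1})$ need not be closed. Take $q_{0}(x)=x_{1}^{2}$, $q_{1}(x)=x_{1}x_{2}$ in any number of variables. Then $q(\R^{n+1})=\{y_{0}>0\}\cup\{(0,0)\}$, since $x_{1}=0$ forces $q(x)=0$; the point $(0,1)=\lim_{\eps\to 0}q(\eps,1/\eps,0,\dots,0)$ lies in the closure but not in the image. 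Here $q(S^{n})$ is the disk $\{(y_{0}-\tfrac{1}{2})^{2}+y_{1}^{2}\leq \tfrac{1}{4}\}$ (compact and convex, as the theorem guarantees), but it is tangent to the $y_{1}$-axis at the origin, so the cone over it misses the two limit rays $\{y_{0}=0,\ y_{1}\neq 0\}$. Consequently your proposed correction via $q(x_{k}+sy)=q(x_{k})+2sB(x_{k},y)$ cannot in general manufacture a bounded preimage of the limit $c$: no argument can, since $c$ may simply fail to belong to the image.

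What your bounded/unbounded dichotomy does prove is closedness under the additional hypothesis that $q_{0},q_{1}$ have no common nonzero real zero (Calabi's situation): then $0\notin q(S^{n})$, so in $c_{k}=t_{k}v_{k}$ with $v_{k}\in q(S^{n})$ the scalars $t_{k}$ stay bounded and the cone over the compact set $q(S^{n})$ is closed. So the defect you sensed is not a removable technicality of your write-up but a counterexample to the unqualified closedness claim; note that the paper's own proof, which simply asserts that the positive cone over $q(S^{n})$ is closed, glosses over precisely this point, while the convexity assertion (which, by Dines' theorem, in fact holds for all $n$) is the part both you and the paper establish correctly.
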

\begin{proof} Since $q(\R^{n+1})$ is the positive cone over $q(S^{n}),$ then it is closed and convex. 
\end{proof}

The previous proof works only for $n+1\geq 3,$ but the theorem is actually true with no restriction on $n.$ The number of quadratic forms is indeed important, as the following example shows: let $q:\R^{3}\to \R^{3}$ be defined by $(x_{0},x_{1},x_{2})\mapsto(x_{0}x_{1},x_{0}x_{2},x_{1}x_{2});$ then the image of $\R^{3}$ under $q$ consists of the four hortants $\{x_{0}\geq 0, x_{1}\geq 0, x_{2}\geq 0\},\,\{x_{0}\leq 0, x_{1}\leq 0, x_{2}\geq 0\},\,\{x_{0}\leq 0, x_{1}\geq 0, x_{2}\leq 0\},\,\{x_{0}\geq 0, x_{1}\leq 0, x_{2}\leq 0\}.$

\section{Homological complexity}
In this section we derive a bound for the homological complexity of $X$ and, in the case $X$ is a regular intersection of quadrics, also a bound for each specific Betti number. 
\begin{teo}\label{bound}Let $X$ be the intersection of two real quadrics in $\RP^n.$ Then
$$b(X)\leq 2n$$
Moreover this bound is sharp.
\end{teo}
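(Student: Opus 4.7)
My plan is to read off $b(X)=\sum_{i,j} e^{i,j}$ from the table $E$ of Theorem~\ref{projgen} by summing its three columns. Column~$0$ contributes $n-\mu+c$, column~$1$ contributes $\sum_{k=1}^{\mu} b_0(\Omega^k)-\mu$, and column~$2$ contributes $\sum_{k=1}^{\mu-1} b_1(\Omega^k)+d$. In the exceptional case $w_{1,\mu}\ne 0$, $\Omega^\mu=S^1$ and $\ii^+\equiv\mu$ forces every $\Omega^k$ to equal $S^1$ for $k\le\mu$, yielding $b(X)=n-1\le 2n$ directly. Otherwise $(c,d)=(1,b_1(\Omega^\mu))$, and the three column sums collapse into
\[
b(X) = n+1-2\mu + \sum_{k=1}^{\mu}\bigl(b_0(\Omega^k)+b_1(\Omega^k)\bigr).
\]

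Next I would analyze the open level sets $\Omega^k\subset S^1$ through the finite zero locus $Z=\{\det(\omega Q)=0\}\cap S^1$ of the binary form $\det(\omega Q)$, which has degree $n+1$. This set $Z$ decomposes $S^1$ into open arcs $a_1,\dots,a_r$ on which $\ii^+$ is locally constant with values $p_1,\dots,p_r$, while at each $z_i\in Z$ the value $\tilde p_i:=\ii^+(z_i q)$ satisfies $\tilde p_i\le\min(p_i,p_{i+1})$. Each $\Omega^k$ is either $S^1$ or a disjoint union of open arcs, and a direct cyclic bookkeeping over $k=1,\dots,\mu$ yields
\[
\sum_{k=1}^{\mu}\bigl(b_0(\Omega^k)+b_1(\Omega^k)\bigr)
= 2m + \sum_{i} p_i - \sum_i \tilde p_i,\qquad m:=\min_i \tilde p_i.
\]

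The critical estimate is $\sum_i p_i-\sum_i\tilde p_i\le n+1$. I would prove it by noting that at each $z_i$ the differences $p_i-\tilde p_i$ and $p_{i+1}-\tilde p_i$ count the zero eigenvalues of $z_iQ$ that become positive on the adjacent arcs, so their sum is bounded by $\dim\ker(z_iQ)$. Summing over $i$ and using $\sum_i\dim\ker(z_iQ)\le 2(n+1)$ --- twice the degree of the binary form $\det(\omega Q)$ on $\RP^1$, with the factor~$2$ accounting for the double cover $S^1\to\RP^1$ --- produces $2\bigl(\sum p_i-\sum\tilde p_i\bigr)\le 2(n+1)$. Substituting into the identity for $b(X)$ gives
\[
b(X)\le 2(n+1)-2(\mu-m).
\]
If $\mu>m$ this is $\le 2n$ immediately; if $\mu=m$ then $\ii^+\equiv\mu$ on all of $S^1$, placing us back in the $\Omega^\mu=S^1$ case handled above, where $b(X)$ equals $n+1$ or $n-1$, both $\le 2n$ for $n\ge 1$.

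For sharpness I would exhibit an explicit family, generalizing the $\RP^3$ model $\{x_0^2-x_2^2=x_1^2-x_3^2=0\}$ (a union of four projective lines with $b=6=2\cdot 3$) by adjoining analogous pairs of squares in additional variables; the same apparatus verifies that it attains $b=2n$, is smooth for even $n$, and is necessarily singular for odd $n$. The principal obstacle I anticipate is the cyclic combinatorial identity for $\sum_k(b_0(\Omega^k)+b_1(\Omega^k))$ in terms of the arc data $(p_i,\tilde p_i)$, together with correctly matching the factor~$2$ from the double cover $S^1\to\RP^1$ with the factor~$2$ already present in the column sums; once this alignment is pinned down, the degree bound on $\det(\omega Q)$ and the table formula of Theorem~\ref{projgen} combine to give~$2n$ automatically.
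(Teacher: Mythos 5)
Your route is genuinely different from the paper's: you work directly with the unperturbed pencil, decompose $S^1$ along $Z=\{\det(\omega Q)=0\}\cap S^1$ and do cyclic bookkeeping with the arc values $p_i$ and point values $\tilde p_i$ (and indeed your identity $\sum_{k=1}^{\mu}(b_0(\Omega^k)+b_1(\Omega^k))=2m+\sum_i p_i-\sum_i\tilde p_i$ is correct when $Z$ is finite and nonempty), whereas the paper first perturbs to $\omega q-\eps p$ (Lemma \ref{perturb}, Proposition \ref{union}) so that $Z(\eps)$ is finite and all index jumps are $\pm1$, and then counts boundary points of the closed sets $\Omega_{n-k}(\eps)$ against the at most $2(n+1)$ intersections of $\{\det(\omega q-\eps p)=0\}$ with the unit circle. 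However, two steps of yours have genuine gaps. First, the theorem allows arbitrary, possibly degenerate pairs of quadrics, and then $Z$ need not be finite: e.g.\ $q_0=x_0^2-x_1^2$, $q_1=x_0x_1$ in $\RP^n$, $n\geq 2$, have $\det(\omega Q)\equiv 0$, so $Z=S^1$ and your arc decomposition never gets started. Handling exactly this is the purpose of the paper's perturbation lemma together with Proposition \ref{union} (which guarantees $b_0(\Omega^{k})$ is unchanged by the perturbation); without some such reduction your argument covers only pencils with $\det(\omega Q)\not\equiv 0$. Second, your key local estimate $(p_i-\tilde p_i)+(p_{i+1}-\tilde p_i)\leq\dim\ker(z_iQ)$ is false: an analytic eigenvalue branch can vanish to second order at $z_i$ and be positive on both sides. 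For instance $Q_0=\left(\begin{smallmatrix}0&0\\0&-1\end{smallmatrix}\right)$, $Q_1=\left(\begin{smallmatrix}0&1\\1&0\end{smallmatrix}\right)$ at $z=(1,0)$: the top eigenvalue of $\cos t\, Q_0+\sin t\, Q_1$ behaves like $t^2$, so both one-sided jumps equal $1$ while $\dim\ker(zQ)=1$. The quantity that does bound the two-sided jump is the order of vanishing of $\det(\omega q)$ at $z_i$ (the sum of the vanishing orders of the eigenvalue branches, which can exceed $\dim\ker(z_iQ)$); with that substitution your count $\leq 2(n+1)$ on $S^1$ goes through, so this part is repairable, but as written the central inequality is not justified.

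The sharpness claim is also not established. ``Adjoining analogous pairs of squares in additional variables'' is not an explicit family, and no index computation is indicated; it is not even clear what the proposed equations are for, say, $n=5$, nor that they give $b=10$. The paper exhibits the cyclic pencil $q(x)=\sum_{k=0}^{n}x_k^2e^{2k\pi i/(n+1)}$ and computes that its index function alternates between $\big[\frac{n+2}{2}\big]$ and $\big[\frac{n+2}{2}\big]-1$, each value attained $n+1$ times, which via Theorem \ref{projgen} gives exactly $b(X)=2n$ (smooth for even $n$, singular for odd $n$). You would need to carry out an analogous explicit computation for whatever family you choose.
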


We notice that the Universal Coefficients Theorem implies $b(X; \Z)\leq b(X)$ and the previous bound holds also for integer Betti numbers.\\
As a corollary, using the transfer exact sequence with $\Z_{2}$ coefficients for the double covering $Y\to X$  (see \cite{Hatcher}) we have the following\footnote{In fact this sequence yields for every $k\geq0$ the inequality $b_{k}(Y)\leq 2b_{k}(X)$; notice that this inequality is not sharp, as the case $S^{n}\to \RP^{n}$ shows.}. 

\begin{coro}Let $Y$ be the intersection of two real quadrics on $S^n$. Then
$$b(Y)\leq 4n$$
\end{coro}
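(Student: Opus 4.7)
My plan is to deduce the corollary from Theorem \ref{bound} by a purely homological argument, exploiting the fact that the antipodal quotient $\pi\colon S^{n}\to\RP^{n}$ restricts to a double covering $\pi|_{Y}\colon Y\to X$. Indeed, if $q_{0},q_{1}$ are the quadratic forms defining $X\subset\RP^{n}$, then their lift to $S^{n}$ is exactly $Y$, and the deck transformation is the antipodal map, so $Y\to X$ is a genuine unramified double cover (possibly of a singular base; this causes no trouble since the transfer construction is homotopical).

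The key tool is the transfer long exact sequence associated with a double cover, with $\Z_{2}$ coefficients. For any connected or disconnected double cover $\pi\colon \tilde A\to A$, one has an exact sequence
\begin{equation*}
\cdots\to H_{k}(A;\Z_{2})\xrightarrow{\tau}H_{k}(\tilde A;\Z_{2})\xrightarrow{\pi_{*}}H_{k}(A;\Z_{2})\xrightarrow{\partial}H_{k-1}(A;\Z_{2})\to\cdots
\end{equation*}
Applied to the double cover $Y\to X$ and using exactness at the middle term, for every $k\in\N$ one obtains the bound
\begin{equation*}
b_{k}(Y)\leq \dim\mathrm{Im}(\tau)+\dim\mathrm{Im}(\pi_{*})\leq b_{k}(X)+b_{k}(X)=2b_{k}(X).
\end{equation*}
This is precisely the inequality mentioned in the footnote.

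Summing the previous inequality over all $k$ and invoking Theorem \ref{bound} gives
\begin{equation*}
b(Y)=\sum_{k}b_{k}(Y)\leq 2\sum_{k}b_{k}(X)=2\,b(X)\leq 2\cdot 2n=4n,
\end{equation*}
which is the desired estimate. There is no real obstacle here: the whole argument is essentially a one-line consequence of the transfer sequence combined with Theorem \ref{bound}. The only point worth being careful about is to make sure the transfer sequence is valid for the possibly singular semialgebraic pair $(Y,X)$; this is fine because the sequence only requires $\pi$ to be a free $\Z_{2}$-action, which the restriction of the antipodal map to $Y$ certainly is.
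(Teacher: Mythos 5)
Your proof is correct and follows the same route as the paper: the paper also deduces the corollary from Theorem \ref{bound} via the $\Z_2$-transfer exact sequence for the double cover $Y\to X$, which (as noted in its footnote) yields $b_k(Y)\leq 2b_k(X)$ for every $k$, and then sums over $k$. Your additional remarks on exactness at the middle term and on the freeness of the antipodal action just make explicit what the paper leaves to the reference.
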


As suggested by the referee, it might be that the maximal homological complexity is attained by a smooth $X.$ We notice that this property is not true even for the case of one single quadric: the smooth nonempty quadric in $\RP^2$ has total Betti number $2$ (it is homeomorphic to a circle), while the singular quadric given by $x_0x_1=0$ is homeomorphic to the wedge of two circles and its total Betti number is $3.$\\Despite this, the property turns out to be true for the intersection of two quadrics in an \emph{even} dimensional projective space, while it is still false in the \emph{odd} dimensional case.  As we will see in the following example the total Betti number of the intersection of two quadrics in an \emph{odd} dimensional projective space can exceed the homological complexity of the complete intersection.
\begin{example}Consider the quadratic map $q:\R^{n+1}\to \R^2\simeq \C$ given by:
$$q(x_0,\ldots, x_n)=\sum_{k=0}^{n}x_k^2 e^{\frac{2k\pi i}{n+1}}$$
The set $X$ in this case is given by $q_0=q_1=0$, where $q_0,q_1$ are the components of $q$. Notice that $X$ is smooth for \emph{even} $n$ and singular otherwise. An easy computation shows that the index function alternates its values between $\big[\frac{n+2}{2}\big]$ and $\big[\frac{n+2}{2}\big]-1$, each one being assumed $n+1$ times. Thus the table $E'$ of theorem \ref{projgen} in this case is the following:
$$
E'=\begin{array}{|c|c|c|}
\hline
1&0&0\\
\hline
\vdots&\vdots&\vdots\\
\hline
1&0&0\\

\hline
c&0&0\\
\hline
0& n&d\\
\hline
0& 0&1\\
\hline
\vdots&\vdots&\vdots\\

\hline
0&0&1\\
\hline
\end{array}
$$
Since the maximum of the index function is $\mu=\big[\frac{n+2}{2}\big]$, then $b_1(\Omega^\mu)=0$ and consequently $w_{1,\mu}=0$. Thus the pair $(c,d)$ equals $(1,0)$ and
$$b(X)=2n.$$
This example shows that the maximal bound of theorem \ref{bound} is indeed sharp.\\
It is interesting to compare this bound with the so called Smith-Thom inequality (see the Appendix of \cite{Wi}). This inequality states that for a real algebraic variety $Y\subset \RP^n$ with complex part $Y_\C$ the following inequality holds:
\begin{equation}\label{smith}b(Y;\Z_2)\leq b(Y_\C;\Z_2).\end{equation}
In the previous example $X$ was smooth for \emph{even} $n$ and singular otherwise. Since the total Betti number of the complete intersection of two quadrics in $\CP^n$ is known to be $2n-(1+(-1)^{n+1})$, then for every smooth intersection $X'$ of two real quadrics in $\RP^n$ inequality (\ref{smith}) implies:
$$b(X')\leq 2n-2,\quad \textrm{$n$ odd}$$
Thus the previous example shows that the maximal complexity of the intersection of two real quadrics in an odd dimensional projective space exceeds the smooth case, and in particular the same holds for the intersection of two complex quadrics.

\end{example}
Before proving theorem \ref{bound} we discuss the following lemma, that will allow to make a kind of general position argument. 
\begin{lemma}\label{perturb}There exists a positive definite form  $p\in \Q(n+1)$ such that for every $\eps>0$ small enough the set 
$$Z(\eps)=\{\omega \in S^1\, |\, \textrm{ker}(\omega q-\eps p)\neq 0\}$$
consists of a finite number of points and the difference of the index function $\omega \mapsto \ii^{-}(\omega q-\eps p)$ on adjacent components of $S^{1}\backslash Z(\eps)$ is $\pm 1.$
\end{lemma}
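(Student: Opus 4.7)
My plan is to reduce the lemma to a transversality statement for the curve
\[
\gamma_\eps:S^1\to \mathrm{Sym}_{n+1}(\R),\qquad \omega\mapsto \omega Q-\eps P,
\]
where $P$ is the matrix of the sought-after positive definite $p$.

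First I would dispatch the finiteness of $Z(\eps)$ by studying the polynomial
\[
D(\omega_0,\omega_1,\eps)=\det(\omega_0 Q_0+\omega_1 Q_1-\eps P),
\]
which is homogeneous of degree $n+1$ in $(\omega_0,\omega_1,\eps)$. Its coefficient of $\eps^{n+1}$ equals $(-1)^{n+1}\det P$, which is nonzero for every positive definite $P$; hence $D\not\equiv 0$ regardless of how degenerate the original pencil $\omega Q$ is. The set of $\eps$ for which $D(\cdot,\eps)\equiv 0$ on $S^1$ is then finite: if it were infinite, then for each $\theta$ the polynomial $\eps\mapsto D(\cos\theta,\sin\theta,\eps)$ would vanish on an infinite set and hence identically, forcing $D\equiv 0$ on $S^1\times\R$ and, by homogeneity, on all of $\R^3$, a contradiction. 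Since $D(\cdot,\eps)|_{S^1}$ is real-analytic on $S^1$, this yields that $Z(\eps)$ is finite for all small $\eps>0$.

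Next I would encode the $\pm 1$-jump condition as the conjunction of two local requirements at each $\omega_*\in Z(\eps)$: (i) $\dim\ker(\omega_*Q-\eps P)=1$, and (ii) the zero eigenvalue of $\omega Q-\eps P$ crosses zero with nonzero derivative as $\omega$ moves along $S^1$. Geometrically, (i) says $\gamma_\eps$ avoids the codimension-three singular stratum $\D^{(2)}=\{A:\dim\ker A\geq 2\}$ of the discriminant $\D=\{\det=0\}\subset\mathrm{Sym}_{n+1}(\R)$, and (ii) says $\gamma_\eps$ meets $\D\setminus\D^{(2)}$ transversally. Granted (i)--(ii), Kato's perturbation theory for simple eigenvalues immediately yields the $\pm 1$ change in $\ii^{-}$ across $\omega_*$, since only one eigenvalue flips sign.

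To secure (i)--(ii) for a single $p$ uniformly in small $\eps$, I would apply Thom--Sard transversality to the parameterized map $(P,\omega,\eps)\mapsto \omega Q-\eps P$, which is a submersion in $P$. For generic $P$ the set $\mathrm{Bad}(P)=\{\eps>0:\text{(i) or (ii) fails for }\gamma_\eps\}$ is semialgebraic and zero-dimensional in $\eps$, hence finite; choosing $\eps_0$ smaller than its smallest positive element, (i)--(ii) hold for every $\eps\in(0,\eps_0)$. The hard part of the argument is exactly this last passage from ``generic $\eps$'' to ``all small $\eps$''; what makes it work is the nonvanishing of the $\eps^{n+1}$-coefficient of $D$ guaranteed by positive definiteness of $P$, which prevents $\mathrm{Bad}(P)$ from accumulating at $\eps=0$.
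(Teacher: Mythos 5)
Your overall strategy --- stratify the discriminant $\D=\{\det=0\}\subset\mathrm{Sym}_{n+1}(\R)$, use the codimension-three bound for the locus $\D^{(2)}$ of kernels of dimension $\geq 2$, identify transversal crossings of the smooth stratum with simple-eigenvalue sign changes, and invoke parametric transversality in $P$ --- is essentially the paper's argument, and your separate determinant computation for the finiteness of $Z(\eps)$ is a correct and more elementary alternative to deducing finiteness from transversality. The gap is in the final step, which you yourself flag as the hard part: the claim that for generic $P$ the set $\mathrm{Bad}(P)$ is zero-dimensional, hence finite, so that it has a smallest positive element. The reason you offer --- the nonvanishing of the coefficient of $\eps^{n+1}$ in $D$, coming from $\det P\neq 0$ --- does not do this job: that coefficient only guarantees $D\not\equiv 0$, i.e.\ that $Z(\eps)$ is not all of $S^1$; it says nothing about whether, along a sequence $\eps_k\to 0$, the curve $\gamma_{\eps_k}$ could meet $\D^{(2)}$, or meet the smooth stratum non-transversally. (Positive definiteness of $p$ is in fact not what drives the transversality statement at all; it is needed elsewhere, in Proposition \ref{union}, to compare $\Omega_{n-j}(\eps)$ with $\Omega^{j+1}$.)

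What actually closes the gap is semialgebraicity, and this is precisely the point of the paper's proof. Parametric transversality in your setup gives, for generic $P$, that the two-parameter map $\Phi_P:(\omega,\eps)\mapsto \omega Q-\eps P$, $\eps>0$, is transversal to every stratum; since its source is two-dimensional, $\Phi_P$ misses all strata of codimension $\geq 3$, so your condition (i) holds for \emph{all} $\eps>0$. But for condition (ii) you must still pass from transversality of $\Phi_P$ to transversality of the slices $\gamma_\eps$: this needs a second application of Sard, to the projection of the semialgebraic curve $M=\Phi_P^{-1}(\D\setminus\D^{(2)})$ onto the $\eps$-axis. Its critical values form a measure-zero \emph{semialgebraic} subset of $\R$, hence a finite set, and only this finiteness rules out bad values of $\eps$ accumulating at $0$; without semialgebraicity, ``almost every $\eps$'' would not yield ``every sufficiently small $\eps$''. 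This is the exact analogue of the paper's choice of $p\in\Q^{+}\setminus\Sigma$, with $\Sigma$ the semialgebraic set of critical values of the projections $\pi_i$, such that the ray $\{tp\}_{t>0}$ meets $\Sigma$ in finitely many points. With that step supplied (and the determinant-coefficient argument kept only for the finiteness of $Z(\eps)$, where it is correct), your proof is complete.
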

\begin{proof}Let $\Q^{+}$ be set of positive definite quadratic forms in $\Q(n+1)$ and consider the map $F:S^{1}\times \Q^{+}\to \Q(n+1)$ defined by 
$$(\omega, p)\mapsto \omega q-p.$$
Let $Z$ be the set of singular quadratic forms in $\Q(n+1),$ semialgebraically and Nash stratified by $Z=\coprod Z_i$.
Since $\Q^{+}$ is open in $\Q,$ then $F$ is a submersion and $F^{-1}(Z)$ is Nash-stratified by $\coprod F^{-1}(Z_{i}).$ Then for $p\in \Q^{+}$ the evaluation map $\omega \mapsto f(\omega)-p$ is transversal to all strata of $Z$ if and only if $p$ is a regular value for the restriction of the second factor projection $\pi:S^{1}\times \Q^{+}\to \Q^{+}$ to each stratum of $F^{-1}(Z)=\coprod F^{-1}(Z_{i}).$
Thus let $\pi_{i}=\pi|_{F^{-1}(Z_{i})}:F^{-1}(Z_{i})\to \Q^{+};$ since all data are smooth semialgebraic, then by semialgebraic Sard's Lemma (see \cite{BCR}), the set $\Sigma_{i}=\{\hat{q}\in \Q^{+}\, | \, \hat{q}\textrm{ is a critical value of $\pi_{i}$}\}$ is a semialgebraic subset of $\Q^{+}$ of dimension strictly less than $\dim (\Q^{+}).$ Hence $\Sigma=\cup_{i}\Sigma_{i}$ also is a semialgebraic subset of $\Q^{+}$ of dimension $\dim (\Sigma)<\dim (\Q^{+})$ and for every $p\in \Q^{+}\backslash \Sigma$ the map $\omega\mapsto f(\omega)-p$ is transversal to each $Z_{i}.$ Since $\Sigma$ is semialgebraic of codimension at least one, then there exists $p\in \Q^{+}\backslash \Sigma$ such that $\{t p\}_{t>0}$ intersects $\Sigma$ in a finite number of points, i.e. for every $\eps>0$ sufficiently small $\eps p\in \Q^{+}\backslash \Sigma.$ Since the codimension of the singularities of $Z$ are at least three, then for $p\in \Q^{+}\backslash \Sigma$ and $\eps>0$ small enough the set $\{\omega \in S^{1}\, |\,\ker(\omega q-\eps p)\neq0\}$ consists of a finite number of points. Moreover if $z$ is a smooth point of $Z$, then its normal bundle at $z$ coincides with the sets of quadratic forms $\{\lambda (x\otimes x)\,|\, x\in \ker(z)\}_{\lambda \in \R}$ then also the second part of the statement follows.\end{proof}

Essentially lemma \ref{perturb} tells that we can perturb the map $\omega\mapsto \omega q$ in such a way that crossing each point where the determinant vanishes the index function changes exactly by $\pm1;$ proposition \ref{union} tells us how to control the topology of the sets $\Omega^{j+1}$ after this perturbation. 
\begin{proof}(Theorem \ref{bound}) Because of theorem \ref{projgen} we have:
$$b(X)\leq n+1-2(\mu-\nu)+\sum_{k=\nu}^{\mu-1}b_0(\Omega^{k+1})$$
where $\nu=\min \ii^+|_\Omega$ (essentially we have made the sum of the elements of the table $E'$ before theorem \ref{projgen} taking into account that for $k<\nu$ the set $\Omega^{k}$ consists of the whole $S^1$). By lemma \ref{perturb} there exists a positive definite form $p$ such that for every $\eps>0$ sufficiently small the set $Z(\eps)=\{\omega \in S^{1}\, |\, \textrm{ker}(\omega q-\eps p)\neq 0\}$ consists of a finite number of points; moreover by lemma \ref{union} for such a $p$ and for $\eps>0$ small enough we also have the equality $b_0(\Omega^{j+1})=b_0(\Omega_{n-j}(\eps))$. This in particular gives
$$b(X)\leq n+1-2(\mu-\nu)+\sum_{k=\nu}^{\mu-1}b_0(\Omega_{n-k}(\eps)).$$
Since for each $\nu\leq k\leq \mu-1$ the set $\Omega_{n-j}(\eps)$ is a disjoint union of closed intervals of $S^{1}$, then:
$$b_0(\Omega_{n-k}(\eps))=\frac{1}{2}b_{0}(\partial \Omega_{n-j}(\eps)).$$
In particular $\sum b(\Omega_{n-k}(\eps))$ equals $\frac{1}{2}\sum b_{0}(\partial \Omega_{n-k}(\eps)),$ where in both cases the sum is made over the indices $\nu\leq k\leq \mu-1$. The second part of lemma \ref{perturb} implies now that each one of the points in $Z(\eps)$ belongs to the boundary of exactly one of the $\Omega_{n-k}(\eps), \nu\leq k\leq \mu-1.$ This implies that the previous sum $\frac{1}{2}\sum b(\partial \Omega_{n-j}(\eps))$ equals exactly half the number of points of $Z(\eps).$ On the other hand $Z(\eps)$ is defined by the intersection in $\R^2$ of the unit circle with a curve of degree $n+1$, namely $\det(\omega q-\eps p)=0.$ Thus $Z(\eps)$ has at most $2(n+1)$ points and 
$$b(X)\leq n+1-2(\mu-\nu)+n+1.$$ 
Since for $\mu=\nu$ we have $b(X)\leq n+1$, then we may assume $\mu-\nu\geq 1$ which finally gives the desired inequality.
\end{proof}
\begin{remark} 
Using the same technique, it is possible to prove that if $X$ is the intersection of three real quadrics in $\RP^n$, then $b(X)\leq n(n+1).$ The interested reader is referred to \cite{Le3}.
\end{remark}
We move now back to the smooth case and we prove the bound for each specific Betti number. The interesting part is that our bound $b_k(X)\leq2(k+2)$ does not depend on $n$.
We start with the following lemma.
\begin{lemma}\label{bk}Let $X$ be a nonsingular intersection of two quadrics in $\RP^n$ and $S^1$ be the union of two half circles $C_1$ and $C_2=-C_1$ such that $C_1\cap C_2=\{\eta, -\eta\}$ and $\det(\eta q)\neq 0$.Then for every $k\in \N$ we have:
$$b_k(X)\leq b_0(\Omega^{n-k}\cap C_1)+b_0(\Omega^{n-k}\cap C_2)$$
\begin{proof}
We first notice that by the semialgebraic Mayer-Vietoris sequence (see \cite{BCR})  for the pair $(C_1,C_2)$ we have:
$$b_0(\Omega^{n-k})\leq b_0(\Omega^{n-k}\cap C_1)+b_0(\Omega^{n-k}\cap C_2)-b_0(\Omega^{n-k}\cap \{\eta\})-b_0(\Omega^{n-k}\cap \{-\eta\})$$
If $k\neq n- \mu$ then theorem \ref{projgen} gives the desired bound, since $b_k(X)\leq b_0(\Omega^{n-k}).$ In the case $k=n-\mu$ theorem \ref{projgen} gives $b_k(X)\leq b_0(\Omega^{\mu})+b_1(\Omega^{\mu-1}).$ If $b_1(\Omega^{\mu-1})=0$ then again the result follows from the previous inequality. In the remaining case the index function has only two values $\mu$ and $\mu-1$ and either $\eta$ or $-\eta$ has index $\mu$: if it was not the case then they both would have index $\mu-1$ which is impossible because $\mu=\max \ii^+$ and $\ii^+(\eta)=n+1-\ii^+(-\eta).$ Thus also in this case the conclusion follows from the previous inequality.\end{proof}
\end{lemma}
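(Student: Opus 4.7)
The plan is to combine the semialgebraic Mayer-Vietoris sequence for the decomposition $S^1 = C_1\cup C_2$ with the Betti number formula of Theorem \ref{projgen}, treating the critical index $k = n-\mu$ as the only case where the nondegeneracy hypothesis at $\eta$ needs to be invoked.

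First I would apply semialgebraic Mayer-Vietoris \cite{BCR} to the pair $(\Omega^{n-k}\cap C_1,\,\Omega^{n-k}\cap C_2)$, whose union is $\Omega^{n-k}$ and whose intersection is contained in $\{\eta,-\eta\}$. The tail of the sequence produces the inequality
$$b_0(\Omega^{n-k}) \le b_0(\Omega^{n-k}\cap C_1) + b_0(\Omega^{n-k}\cap C_2) - b_0(\Omega^{n-k}\cap\{\eta,-\eta\}).$$
For indices $k\ne n-\mu$, Theorem \ref{projgen} gives the direct estimate $b_k(X)\le b_0(\Omega^{n-k})$ by reading off the relevant rows of the table $E'$ (the third column contributes only a term $b_1(\Omega^{j})\le 1$ that is absorbed into the $b_0(\Omega^{j})-1$ entry of the second column), so dropping the nonnegative correction term in the displayed inequality already yields the conclusion in these cases.

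The main obstacle is the critical index $k = n-\mu$, where Theorem \ref{projgen} only gives
$$b_{n-\mu}(X) \le b_0(\Omega^\mu) + b_1(\Omega^{\mu-1}),$$
with a possibly nonzero extra summand $b_1(\Omega^{\mu-1})\in\{0,1\}$. If $b_1(\Omega^{\mu-1})=0$ the previous paragraph applied to $\Omega^\mu$ concludes the argument. Otherwise $\Omega^{\mu-1}=S^1$, and consequently the index function $\ii^+$ takes only the two values $\mu$ and $\mu-1$ on the entire circle.

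Finally I would use the nondegeneracy hypothesis at $\eta$ to show that $b_0(\Omega^\mu\cap\{\eta,-\eta\})\ge 1$, so that the Mayer-Vietoris correction term absorbs the extra unit $b_1(\Omega^{\mu-1})=1$. Indeed, from $\det(\eta q)\ne 0$ we have $\ii^+(\eta)+\ii^+(-\eta)=n+1$; if both values equalled $\mu-1$ we would obtain $\mu=(n+3)/2$, and then at any point $\omega$ realizing $\ii^+(\omega)=\mu$ the antipode would have index at most $n+1-\mu=\mu-2<\mu-1$, contradicting $\Omega^{\mu-1}=S^1$. Hence at least one of $\pm\eta$ lies in $\Omega^\mu$, and the Mayer-Vietoris inequality above then yields the desired bound.
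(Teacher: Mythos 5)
Your argument reproduces the paper's proof step for step: the same Mayer--Vietoris inequality for the pair $(\Omega^{n-k}\cap C_1,\Omega^{n-k}\cap C_2)$, the same appeal to Theorem \ref{projgen}, and the same treatment of the critical index $k=n-\mu$, where your count showing that one of $\pm\eta$ must lie in $\Omega^{\mu}$ (via $\ii^+(\eta)+\ii^+(-\eta)=n+1$ and the fact that the antipode of a point of index $\mu$ has index at most $n+1-\mu$) is exactly the paper's argument, written out in a little more detail. That part is correct.

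However, the step ``$b_k(X)\le b_0(\Omega^{n-k})$ for all $k\ne n-\mu$'' is asserted by you (and by the paper) in a range that your own justification does not cover. The absorption argument reads off a row of $E'$ of the form $(0,\,b_0(\Omega^{n-k})-1,\,\ast)$, and such rows occur only for $n-k\le\mu-1$, i.e. $k>n-\mu$. For $k<n-\mu$ Theorem \ref{projgen} gives $b_k(X)=e^{0,n-k}+e^{2,n-k-2}\ge 1$ (the first-column entry above the row $\mu$ is $1$), while $\Omega^{n-k}\subseteq\Omega^{\mu+1}=\emptyset$; so the claimed estimate, and in fact the inequality of the lemma itself (whose right-hand side then vanishes), can only hold if $\mu\ge n$, which nonsingularity does not guarantee. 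Concretely, for $q_0=x_0^2-x_1^2+x_2^2-x_3^2$ and $q_1=2x_0x_1+4x_2x_3$ the polynomial $\det(\omega q)=(\omega_0^2+\omega_1^2)(\omega_0^2+4\omega_1^2)$ has four distinct roots, so $X\subset\RP^3$ is a nonsingular complete intersection; here $\ii^+\equiv 2$, hence $\mu=2$ and $\Omega^{3}=\emptyset$, yet $X\neq\emptyset$ (e.g. $[\sqrt{2}:\sqrt{2}:1:-1]\in X$), so at $k=0$ both your intermediate claim and the stated inequality fail. Since the identical step appears in the paper's proof, this is not a deviation on your part, and it is immaterial for the only application, Theorem \ref{bkbound}, because for $k<n-\mu$ one has $b_k(X)\le 1+e^{2,n-k-2}\le 2\le 2(k+2)$; but both the lemma and your first reduction should be restricted to $k\ge n-\mu$ (equivalently $\Omega^{n-k}\neq\emptyset$). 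A smaller shared caveat: the Mayer--Vietoris inequality as you state it presupposes that $H_0(\Omega^{n-k}\cap\{\eta,-\eta\})$ injects into $H_0(\Omega^{n-k}\cap C_1)\oplus H_0(\Omega^{n-k}\cap C_2)$, which fails exactly when $\Omega^{n-k}=S^1$; in that case one should conclude directly, since then $b_k(X)\le b_0(\Omega^{n-k})+b_1(\Omega^{n-k-1})\le 2$, which is the right-hand side of the lemma.
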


\begin{teo}\label{bkbound}Let $X$ be a \emph{smooth} intersection of two quadrics in $\RP^n$. Then for every $k\in \N$:
$$b_k(X)\leq 2(k+2)$$
\begin{proof}
We start by proving a formula for the index function of a smooth intersection of quadrics. Recall that this amounts to the fact the set of points $Z$ on $S^1$ where the determinant of $\omega q$ vanishes is finite and at each one of these points the index function jumps exactly by $\pm1.$ Let us fix an orientation of the circle and divide the set $Z$ into the disjoint subset $Z^+$ and $Z^-$ consisting respectively of the points where the index function jumps by $+1$ and $-1$ when crossing them counterclockwise. Let us fix also two points $\eta, -\eta$ on $S^1$ where the determinant does not vanish. Consider the counterclockwise arc $I=[-\eta, \eta]$ and a point $\omega\in \textrm{Int}(I)$ such that $\det(\omega q)\neq 0.$ Le $\rho^{\pm}(\omega)$ and $\lambda^{\pm}(\omega)$  be respectively the cardinality of $[-\eta, \omega]\cap Z^\pm$ and $[\omega, \eta]\cap Z^\pm.$ Let also $2\theta$ be the number of non real projective solutions to $\det(\omega q)=0.$ \\ We claim that using this notations the index function verifies:
\begin{equation}\label{index}\ii^+({\omega})=\rho^+(\omega)+\lambda(\omega)+\theta.\end{equation}
Let us prove this formula. We have $\ii^+(-\omega)=n+1-\ii^+(\omega)$, because $\det(\omega q)\neq 0$, and $\ii^+(-\omega)=\ii^+(\omega)+\lambda^+(\omega)-\lambda(\omega)+\rho(\omega)-\rho^+(\omega)$, because of the definition of the functions $\lambda^\pm$ and $\rho^\pm$ and the fact that $Z^-=-Z^+$. Thus:
$$2\ii^+(\omega)=n+1-\lambda^+(\omega)+\lambda(\omega)-\rho(\omega)+\rho^+(\omega)$$
On the other hand we also have:
$$\lambda^+(\omega)+\lambda(\omega)+\rho(\omega)+\rho^+(\omega)+2\theta=n+1$$
which combined with the previous equation gives (\ref{index}).\\
Because of lemma \ref{bk}, to prove the theorem it is sufficient to show that 
$$b_0(\Omega^{n-k}\cap I)\leq k+2$$
If $\Omega^{n-k}$ is not empty let $\eta_1, \eta_2$ be two points satisfying: (i) $I=[-\eta, \eta_1]\cup[\eta_1, \eta_2]\cup[\eta_2,\eta]$; (ii) the determinant does not vanish at $\eta_1, \eta_2$; (iii) $\ii^+(\eta_1)=\ii^+(\eta_2)=n-k$; (iv) either $\ii^+|_{[-\eta, \eta_1]}\leq n-k$ or $\ii^+|_{[-\eta, \eta_1]}\geq n-k$ and the same holds for $\ii^+|_{[\eta_2, \eta]}$. To get these points it is sufficient to let $\eta_1$ belong to the closest interval to $-\eta$ where the index function is $n-k$ and $\eta_2$ to the closest interval to $\eta$ where the index function is $n-k$. To simplify notations let:
$$a^{\pm}=\textrm{Card}( [-\eta, \eta_1]\cap Z^\pm),\quad b^{\pm}=\textrm{Card}([\eta_1, \eta_2]\cap Z^\pm),\quad c^{\pm}=\textrm{Card}([\eta_2,\eta]\cap Z^\pm)$$
Then by the index formula (\ref{index}) we have $n-k=\ii^+(\eta_1)=c^-+b^-+a^++\theta$ and  $n-k=\ii^+(\eta_2)=c^-+b^++a^++\theta$, which combined give $b^-=b^+$. On the other hand we also have $a^-+a^++2b+c^-+c^+=n+1-2\theta$, which combined with $n-k=a^++b^-+c^-+\theta$ gives:
$$b\leq k+1-\theta\leq k+1$$
Thus the number of points in $[\eta_1, \eta_2]$ where the index function can change sign are at most $k+1$ and in this case:
$$b_0(\Omega^{n-k}\cap I)= b_0(\Omega^{n-k}\cap [\eta_1, \eta_2])\leq k+2$$
\end{proof}
\end{teo}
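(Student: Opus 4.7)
The plan is to combine Lemma \ref{bk} with a careful combinatorial analysis of the index function on $S^1$. Lemma \ref{bk} reduces the theorem to showing that, for each half-circle $C\subset S^1$ whose endpoints avoid the discriminant locus $Z=\{\det(\omega q)=0\}$, one has the bound $b_0(\Omega^{n-k}\cap C)\leq k+2$. Since $\Omega^{n-k}\cap C$ is a finite disjoint union of closed subintervals, this amounts to estimating how many times the index function $\ii^+$ drops below the value $n-k$ along $C$.

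Smoothness of $X$ is exactly the statement (via Lemma \ref{perturb}, applied to $\eps=0$) that $Z$ is finite and the index jumps by exactly $\pm 1$ across each point of $Z$. I would fix an orientation of $S^1$ and partition $Z=Z^+\sqcup Z^-$ by the sign of the jump when crossing counterclockwise. The key structural input is the antipodal symmetry $\ii^+(\omega)+\ii^+(-\omega)=n+1$, valid off $Z$, which forces $Z^-=-Z^+$; equivalently, the number of sign-change points paired by $\omega\mapsto -\omega$ equals the total count in either $Z^+$ or $Z^-$.

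Next I would derive a quasi-explicit formula for $\ii^+(\omega)$ at a generic $\omega\in C$: counting $\rho^\pm(\omega)$ and $\lambda^\pm(\omega)$ for the number of points of $Z^\pm$ lying to each side of $\omega$ along the half-circle $C$, and introducing a constant $2\theta$ for the number of projective non-real roots of $\det(\omega q)$, the antipodal identity combined with the telescoping of unit jumps yields an expression of the shape
\[
\ii^+(\omega)=\rho^+(\omega)+\lambda(\omega)+\theta.
\]
I would then pick $\eta_1,\eta_2\in C$ as the extreme interior points where $\ii^+=n-k$ (so all components of $\Omega^{n-k}\cap C$ are concentrated in $[\eta_1,\eta_2]$), write $a^\pm,b^\pm,c^\pm$ for the $Z^\pm$-counts in the three subintervals $[-\eta,\eta_1],[\eta_1,\eta_2],[\eta_2,\eta]$, and equate the index formula at $\eta_1$ and $\eta_2$. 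This should give $b^+=b^-$, and combining with the degree constraint $a^\pm+b^\pm+c^\pm$ totalling $n+1-2\theta$ should produce $b^\pm\leq k+1-\theta\leq k+1$. The components of $\Omega^{n-k}\cap[\eta_1,\eta_2]$ are separated by downward jumps, so there are at most $b^-+1\leq k+2$ of them.

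The main obstacle I expect is the bookkeeping in the index formula, in particular getting the $\theta$ term right (the complex-conjugate roots of the discriminant do not lie on $S^1$ but still show up in the total degree count of the characteristic polynomial) and checking that the extreme-interior choice of $\eta_1,\eta_2$ really does capture every component of $\Omega^{n-k}\cap C$, including the boundary behavior at the endpoints of $C$. Once the formula and choice are made rigorous, feeding the bound $k+2$ back into Lemma \ref{bk} gives the desired $b_k(X)\leq 2(k+2)$.
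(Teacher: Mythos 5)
Your proposal follows the paper's own argument essentially verbatim: the reduction via Lemma \ref{bk} to bounding $b_0(\Omega^{n-k}\cap C)$ on a half-circle, the index formula $\ii^+(\omega)=\rho^+(\omega)+\lambda(\omega)+\theta$ obtained from the antipodal relation $\ii^+(\omega)+\ii^+(-\omega)=n+1$ together with $Z^-=-Z^+$, the choice of the extreme points $\eta_1,\eta_2$ with index $n-k$, the identity $b^+=b^-$, and the degree count giving $b\leq k+1$ and hence at most $k+2$ components. The only cosmetic discrepancy is attributing the "finite $Z$ with jumps $\pm1$" fact to Lemma \ref{perturb} at $\eps=0$ (the paper derives it directly from nonsingularity of $X$), which does not affect the argument.
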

\begin{remark}Notice that because of the Universal Coefficients Theorem, the result holds also for Betti numbers with $\Z$ coefficients.
\end{remark}
\begin{remark}Using the transfer exact sequence for the covering $Y\to X$ as above, we also get $$b_k(Y;\Z)\leq b_k(Y)\leq 4(k+2)$$ for $Y$ a nonsingular intersection of two quadrics on $S^n.$

\end{remark}
\begin{remark}
Estimates on the Betti Numbers of system of quadratic inequalities are given in the general case in \cite{BaKe} and \cite{BaPaRo}; in the case of intersection of quadrics in projective space estimates on the number of connected components are given in \cite{DeItKh}. In particular, following the notations of \cite{DeItKh}, we can denote by $B_{r}^{k}(n)$ the maximum value that the $k$-th Betti number of a complete intersection of $r+1$ quadrics in $\RP^{n}$ can have. There it is proved that
$$B_{2}^{0}(n)\leq \frac{3}{2}l(l-1)+2,\quad l=[n/2]+1.$$
With this notation our previous result reads:
$$B_1^k(n)\leq 2(k+2)$$
In particular the maximal number of connected component is $4$; this number is sharp in the family of nonsingular intersection, as the case of two appropriate quadrics in $\RP^2$ shows. More generally, for \emph{even} $n$ the pair of quadrics of the example at the beginning of this section attains the maximum for $k=\frac{n}{2}-1$
\end{remark}
\section*{Appendix: Level sets of quadratic maps}
In this section we prove formulas similar to that in theorem \ref{projgen} for the intersection of two quadrics on the sphere. More generally we give a formula for the level set in $\R^{n+1}$ of a homogeneous quadratic map. The reason for putting this theorems in the Appendix is that we use the machinery of spectral sequences.\\
We start with the formula for the Betti numbers of the set defined by
$$Y=\{x\in S^{n}\,|\, q(x)\in K\}.$$
\begin{teo}\label{spherical}The following formula holds for $k< n-2$: $$\tilde{b}_{k}(Y)=\tilde{b}_{n-k-1}(S^{n}\backslash Y)=b_{0}(\Omega^{n-k},\Omega^{n-k+1})+b_{1}(\Omega^{n-k-1},\Omega^{n-k}).$$
\end{teo}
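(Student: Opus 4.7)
The strategy is to carry out the construction of Section 3 on the sphere instead of the projective space, and then read off the Betti numbers via a spectral sequence whose collapse requires the range $k<n-2$. By Alexander duality in $S^n$ with $\Z_2$ coefficients one has $\tilde b_k(Y)=\tilde b_{n-k-1}(S^n\setminus Y)$, so it suffices to prove the second equality. Setting $\ell=n-k-1$ (so $\ell\geq 2$), I would define $F:\Omega\times S^n\to\R$ by $F(\omega,x)=(\omega q)(x)$ and set $B_S=\{F>0\}$. The identity $K^{\circ\circ}=K$ makes the projection $p_2:B_S\to S^n\setminus Y$ surjective, with fibers the open arcs $\{\omega\in\Omega:\langle\omega,q(x)\rangle>0\}$; since these are contractible, $p_2$ is a homotopy equivalence and the problem reduces to computing $H^\ell(B_S)$.

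Next, I would study the other projection $p_1:B_S\to\Omega$. Its fiber over $\omega$ deformation retracts onto the unit sphere $S^{\ii^+(\omega q)-1}$ of the positive eigenspace $P^+(\omega q)$. On each stratum $\Omega^j\setminus\Omega^{j+1}$ the index function is constantly $j$, and by Kato's regular-perturbation theory (already used in Proposition \ref{union}) the positive eigenspace varies smoothly, making $p_1$ into a locally trivial $S^{j-1}$-bundle over that stratum. Since $\textrm{Aut}(\Z_2)=1$, the Leray sheaf $\mathcal{L}^q=R^q(p_1)_*\Z_2$ carries no monodromy, and its stalk is $\Z_2$ on $\Omega^{q+1}$ and zero elsewhere.

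Then I would filter $B_S$ by the nested open subsets $B_S|_{\Omega^j}$ and run the associated spectral sequence. A Leray--Hirsch / Thom argument for the $S^{j-1}$-bundle over the stratum $\Omega^j\setminus\Omega^{j+1}$, combined with excision for the pair $(\Omega^j,\Omega^{j+1})$, would contribute at level $j$ the relative groups $H^*(\Omega^j,\Omega^{j+1})$ tensored with the cohomology of the sphere fiber, i.e.\ two copies in degrees $0$ and $j-1$. Because $\Omega\subseteq S^1$ is one-dimensional only $H^0$ and $H^1$ of each relative pair survive, so the terms landing in total degree $\ell\geq 2$ are precisely $b_1(\Omega^\ell,\Omega^{\ell+1})$ (top class at level $j=\ell$, bidegree $(1,j-1)$) and $b_0(\Omega^{\ell+1},\Omega^{\ell+2})$ (top class at level $j=\ell+1$, bidegree $(0,j-1)$). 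The relative pairs appear rather than absolute groups because the sphere's top cohomology class is supported exactly on the locus where the fiber dimension stays constant, i.e.\ where the stratification jumps.

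The main obstacle is to show that no higher differential in the spectral sequence links these two surviving strands. For $\ell\geq 2$ the candidate targets of $d_r$ with $r\geq 2$ lie in bidegrees that vanish for dimensional reasons (everything is concentrated in $p\in\{0,1\}$ and in only two fiber degrees per stratum), so the sequence degenerates at the appropriate page and the formula follows. For smaller $\ell$ the surviving strands can interact with the bottom-class piece at $j=1$ and with the absolute cohomology of $\Omega$ itself, producing a characteristic-class correction analogous to the Stiefel--Whitney obstruction appearing in Theorem \ref{projgen}; this is precisely why the statement is restricted to $k<n-2$.
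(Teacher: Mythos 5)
Your overall route is the paper's route: Alexander duality, the set $B=\{(\omega,x)\in\Omega\times S^n\,|\,(\omega q)(x)>0\}$ with $p_2$ a homotopy equivalence onto $S^n\setminus Y$, then a spectral sequence over $\Omega$ which degenerates because $\dim\Omega=1$, with the range $k<n-2$ excluding the anomalous $S^0$-fiber contributions. The claimed $E_2$-terms even agree with the paper's. But there is a genuine gap in the middle step: you work directly with the open set $B_S=\{F>0\}$ and assert that the stalk of $R^q(p_1)_*\Z_2$ at $\omega$ is $H^q$ of the fiber over $\omega$, supported where you expect. For a non-proper map this is false: the stalk is $\varinjlim_U H^q(B_S|_U)$, and at a point where the index jumps the nearby, larger fibers dominate this limit (since the open fibers only grow into the higher-index region, $B_S|_U$ retracts onto a nearby big fiber). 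A concrete failure: for a family in which the fiber degenerates from $S^0$ to $\emptyset$ at $\omega_0$, the stalk of $R^0$ at $\omega_0$ is $\Z_2\oplus\Z_2$ while $H^0(p_1^{-1}(\omega_0))=0$. Your alternative reading via the filtration by $B_S|_{\Omega^j}$ has the same problem in a different guise: the $E_1$-terms are $H^*\bigl(B_S|_{\Omega^j},B_S|_{\Omega^{j+1}}\bigr)$, and since $p_1$ is \emph{not} locally trivial across the jump locus, ``Leray--Hirsch plus excision'' does not yield $H^*(\Omega^j,\Omega^{j+1})\otimes H^*(S^{j-1})$ without further argument; this identification is exactly the hard point.

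The paper closes this gap by an $\eps$-regularization: it replaces $B$ by the compact set $B(\eps)=\{(\omega,x)\,|\,(\omega q)(x)\geq\eps\}$ (a homotopy equivalent inclusion for small $\eps$), so that $\pi=p_1|_{B(\eps)}$ is \emph{proper} and the Leray stalks really are the fiber cohomologies; the relevant sheaves for $j>0$ are then extensions by zero of constant sheaves on differences of the \emph{closed} sets $\Omega_{n-j}(\eps)\supseteq\Omega_{n-j-1}(\eps)$, giving cleanly $E_2^{i,j}(\eps)\simeq\check H^i(\Omega_{n-j}(\eps),\Omega_{n-j-1}(\eps))$, and finally Proposition \ref{union} (the $\eps\to 0$ stabilization) converts these closed pairs into the open pairs $(\Omega^{j+1},\Omega^{j+2})$ appearing in the statement. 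Some version of this properness-plus-limit argument (or another device computing the relative terms across the jump locus) is needed; as written, your identification of the $E_2$-page is unsupported, even though the degeneration argument ($E_2^{i,j}=0$ for $i\geq2$) and the bookkeeping in total degree $\ell=n-k-1\geq2$ are correct. Your closing explanation of the restriction $k<n-2$ is also slightly off target: in the paper it is the non-canonical splitting (and possible monodromy) of $H^0(S^0)=\Z_2\oplus\Z_2$ for the $j=0$ sheaf, not a differential or characteristic-class correction, that spoils the low degrees.
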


\begin{proof}
The first equality follows from Alexander duality. For the second consider the set
$$B=\{(\omega, x)\in\Omega\times S^{n}\,|\, (\omega q)(x)>0\}.$$
The projection $p_{2}:B\to S^{n}$ gives a homotopy equivalence $B\sim p_{2}(B)=S^{n}\backslash Y$ (the fibers are contractible). On the other side for $\epsilon>0$  sufficiently small the inclusion
$$B(\eps)=\{(\omega,x)\in \Omega\times S^{n}\,|\, (\omega q)(x)\geq \eps\}\hookrightarrow B$$ 
is a homotopy equivalence. Consider $\pi=p_{1}|_{B(\eps)}:B(\eps)\to \Omega$ and the Leray spectral sequence associated to it: 
$$(E_{r}(\eps),d_{r})\Rightarrow H^{*}(B(\eps);\Z_{2}),\,\,E_{2}(\eps)^{i,j}=\check{H}^{i}(\Omega, \mathcal{F}^{j}(\eps)),$$ where $\mathcal{F}^{j}(\eps)$ is the sheaf associated to the presheaf $V\mapsto H^{j}(\pi^{-1}(V)).$ Since $B(\eps)$ and $\Omega$ are locally compact and $\pi$ is proper ($B(\eps)$ is compact) then the following isomorphism holds for the stalk of $\mathcal{F}^{j}(\eps)$ at each point $\omega\in \Omega:$
$$\mathcal{F}^{j}(\eps)_{\omega}\simeq H^{j}(\pi^{-1}(\omega)).$$
Let $g\in \R[x_{0},\ldots, x_{n}]_{(2)}$ such that $S^{n}=\{g(x)=1\}$, then $\pi^{-1}(\omega)\simeq \{x\in S^{n}\,|\, (\omega q-\eps g)(x)\geq 0\}$ has the homotopy type of a sphere of dimension $n-\textrm{ind}^{-}(\omega q-\eps g);$ thus if we set $\ii^{-}(\eps)$ for the function $\omega \mapsto \textrm{ind}^{-}(\omega q-\eps g),$ we have that for $j>0$ the sheaf $\mathcal{F}^{j}(\eps)$ is locally constant with stalk $\Z_{2}$ on $\Omega_{n-j}(\eps)\backslash \Omega_{n-j-1}(\eps),$ where $\Omega_{n-j}(\eps)=\{\ii^{-}(\eps)\leq n-j\},$ and zero on its complement. Since $\Omega_{n-j-1}(\eps)$ is closed in $\Omega_{n-j}(\eps),$ we have for $j>0:$
$$\check{H}^{i}(\Omega, \mathcal{F}^{j}(\eps))=\check{H}^{i}(\Omega_{n-j}(\eps), \Omega_{n-j-1}(\eps)).$$
Since the sets $\{\Omega_{n-j}(\eps)\}_{j\in \N}$ are CW-subcomplex of the one-dimensional complex $S^{1}$ (covers such that triple intersections of their open sets are empty are cofinal), then $E_{2}^{i,j}(\eps)=0$ for $i\geq2$ and the Leray spectral sequence of $\pi$ degenerates at $E_{2}(\eps).$ By semialgebraicity the topology of $\Omega_{n-j}(\eps)$ is definitely constant in $\eps$ and form small $\eps$ we have
$$E_{2}^{i,j}(\eps)\simeq \varprojlim\{\check{H}^{i}(\Omega_{n-j}(\eps), \Omega_{n-j-1}(\eps))\},\quad j>0.$$
Lemma \ref{union} implies for $j>0$ the isomorphism $E_{2}^{i,j}(\eps)\simeq \check{H}^{i}(\Omega^{j+1},\Omega^{j+2})$ and the conclusion follows.
\end{proof}

\begin{remark}The anomalous behaviour for $j=0$ is due to the fact that there is no canonical choice for the generator of $H^0(S^0)=\Z_2\oplus\Z_2.$
\end{remark}

We discuss now the topology of the level sets of a homogeneous quadratic map. We start with the following observation: in the case we are given a semialgebraic subset $A$ in $\R^{n}$ defined by inequalities involving polynomials of degree two (the presence of degree one polynomials reduce to this case by restricting to affine subspaces), then  $A$ is homotopy equivalent to the set $A(\eps)$ defined in the projective space by homogenization of the inequalities defining $A$ and by adding the inequality $l_{\eps}\leq 0$ for $\eps>0$ small enough (see \cite{Le}).\\
Thus we reduce the problem of studying the topology of $A$ to that of studying the set of projective solutions of a system of \emph{three} homogeneous quadratic inequalities, one of which is a very particular fixed one. A technique to deal with such a problem, which is a generalization of the one discussed in this paper, is introduced in \cite{AgLe}.\\
A case of particular interest is when $A$ is the level set of a homogeneous quadratic map:
$$A=q^{-1}(c),\quad c\in \R^2$$
It is convenient in this case to use the geometry of the \emph{negative} inertia index and define the sets:
$$C=\{\omega\in S^1\,|\, \langle\omega, c\rangle<0\}\quad \textrm{and}\quad C_k=\{\omega \in C\,|\, \ii^-(\omega q)\leq k\}\quad k\in \N$$
With this notation we have the following theorem; for a proof the interested reader is referred to \cite{Le}.
\begin{teo}Let $q:\R^n\to \R^2$ be a homogeneous quadratic map and $c\in \R^2.$ Then $A=q^{-1}(c)$ is nonempty if and only if $\min \ii^-|_C\neq 0$, in which case for $0\leq k\leq n$ we have the following formula:
$$\tilde b_{k}(A)=b_{0}(C_{k+1},C_{k})+b_{1}(C_{k+2},C_{k+1})$$
\end{teo}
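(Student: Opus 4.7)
The plan is to reduce the affine level-set problem to a compact projective one via homogenization, and then to apply the index-filtration machinery of this paper together with its generalization in \cite{AgLe}.

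First I would dispose of the nonemptiness criterion. By Corollary \ref{convex} the image $q(\R^n)\subset\R^2$ is a closed convex cone, so $A=q^{-1}(c)$ is empty precisely when $c\notin q(\R^n)$. Standard hyperplane separation then produces a covector $\omega\in(\R^2)^*$ with $\langle\omega,c\rangle<0$ and $(\omega q)(x)\geq 0$ for every $x\in\R^n$; after normalization this is exactly an $\omega\in C$ with $\ii^-(\omega q)=0$. Hence $A\neq\emptyset$ if and only if $\min\ii^-|_C\neq 0$.

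For the Betti-number formula I would homogenize. Introducing a new coordinate $x_0$ and the map $\tilde q:\R^{n+1}\to\R^2$ given by $\tilde q(x_0,x)=q(x)-c\,x_0^2$, the set $A$ is the affine slice $\{x_0=1\}$ of the projective variety $\tilde X=\{[x_0:x]\in\RP^n\mid\tilde q=0\}$. As explained at the start of this appendix, $A$ is homotopy equivalent to
$$A(\eps)=\{[x_0:x]\in\RP^n\mid\tilde q_0(x)=\tilde q_1(x)=0,\ \eps\|x\|^2-x_0^2\leq 0\}$$
for every sufficiently small $\eps>0$, where $\|\cdot\|$ denotes the Euclidean norm on the last $n$ coordinates. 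This describes $A(\eps)$ by a system of three homogeneous quadratic inequalities on $\RP^n$, placing us inside the spectral-sequence framework of \cite{AgLe} with the polar cone intersected with the $2$-sphere giving a closed hemisphere $S^2_+=\{\omega_2\geq 0\}$ on which the index function lives.

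The crucial structural observation is that, in the orthogonal splitting $\R^{n+1}=\R x_0\oplus\R^n$, the symmetric matrix of $\omega'_0\tilde q_0+\omega'_1\tilde q_1+\omega_2(\eps\|\cdot\|^2-x_0^2)$ is block diagonal, with $1\times 1$ block equal to $-\langle\omega',c\rangle-\omega_2$ and $n\times n$ block equal to $\omega'q+\eps\omega_2 I_n$. The negative inertia index therefore splits as a sum of the two contributions, and as $\eps\to 0^+$ the $\R^n$-block tends to $\omega'q$. Inside the spectral sequence, the hemisphere $S^2_+$ deformation retracts onto its equator $S^1=\{\omega_2=0\}$; the resulting filtration on $S^1$ records $\ii^-(\omega'q)$ precisely on the arc $C=\{\langle\omega',c\rangle<0\}$, while on the complementary arc the $1\times 1$ block shifts the index upwards by one and contributes only to ranges that are killed by Alexander duality. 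After the bookkeeping, the two nontrivial columns of the $E_2$ page of the filtration yield exactly the summands $b_0(C_{k+1},C_k)$ and $b_1(C_{k+2},C_{k+1})$ of the statement, the shift in indices being the affine-chart/codimension correction $n+1\to n$.

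The main obstacle is this last bookkeeping. For systems of three or more quadrics the spectral sequence of \cite{AgLe} does not generally degenerate at the second page, so one must exploit the very special structure of the auxiliary form $l_\eps=\eps\|x\|^2-x_0^2$ --- an orthogonal sum of a rank-one negative form on $\R x_0$ and a small positive multiple of the identity on $\R^n$ --- to show that all higher differentials vanish here. One must also verify that the final formula is independent both of $\eps$ and of the auxiliary positive definite form chosen to compactify the affine chart; this is the semialgebraic stability argument already used in Proposition \ref{union} and in the proof of Theorem \ref{spherical}. The complete argument is carried out in \cite{Le}.
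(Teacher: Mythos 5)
You should first be aware that the paper itself does not prove this theorem: it only sketches the reduction (homogenize the equations, add the auxiliary inequality $l_{\eps}\leq 0$, and treat the result as a system of \emph{three} homogeneous quadratic inequalities via the machinery of \cite{AgLe}) and then explicitly refers the reader to \cite{Le} for the proof. Your proposal follows exactly this sketch. Your treatment of the nonemptiness criterion is correct and essentially complete: $q(\R^n)$ is a closed convex cone (Corollary \ref{convex}; note that the proof given in the paper requires at least three variables, the low-dimensional case being only asserted there), and separating $c$ from this cone produces $\omega$ with $\langle \omega, c\rangle<0$ and $\omega q\geq 0$, i.e. a point of $C$ where $\ii^{-}(\omega q)=0$, while conversely such an $\omega$ obstructs solvability of $q(x)=c$.

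For the Betti-number formula, however, your argument stops exactly where the real work begins, and this is a genuine gap if the proposal is read as a proof rather than as a reduction. The block decomposition of $\omega'\tilde q+\omega_{2}l_{\eps}$ is correct, but the assertions that the hemisphere ``retracts onto its equator inside the spectral sequence,'' that the complementary arc contributes only in ranges ``killed by Alexander duality,'' and that ``after the bookkeeping'' the two columns yield $b_{0}(C_{k+1},C_{k})$ and $b_{1}(C_{k+2},C_{k+1})$ are precisely the content of the theorem, not steps you have carried out. Concretely: (i) the parameter space is now the two-dimensional set $\{\omega_{2}\geq 0\}$, so the relevant second term has a third column $\check{H}^{2}$ and a possibly nonzero second differential, neither of which you address; (ii) the index filtration on the hemisphere is not determined by its restriction to the equator, since as $\omega_{2}$ grows the block $\omega' q+\eps\omega_{2}I_{n}$ is pushed positive while the block $-\langle\omega',c\rangle-\omega_{2}$ is pushed negative, so the claimed compatibility of the filtration with a retraction to $\{\omega_{2}=0\}$, and the way the arc $C$ versus its complement and the degree shift enter, all require an actual argument; (iii) the $\eps$-independence must be run as in Proposition \ref{union} and Theorem \ref{spherical}, which you mention but do not perform. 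Since you, like the paper, ultimately defer all of this to \cite{Le}, your text is a faithful reconstruction of the paper's outline together with a correct proof of the nonemptiness statement, but it is not a proof of the displayed formula.
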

Notice in particular that if $c=0$, then $A$ is contractible: it is the cone over $A\cap S^{n-1}$ and for the Betti numbers of this set we can use theorem \ref{spherical}.
\begin{remark}
The statement of the previous theorem still holds for systems of \emph{inequalities}:
if $A=\{q_{0}\leq c_{0},q_{1}\leq c_{1}\}$ then the result is the same by setting $K=\{y_0\leq 0, y_1\leq 0\}$ and $C_{k}=\{\omega \in K^{\circ}\cap S^{1}\, |\, \langle\omega, c\rangle <0, \, \ii^{-}(\omega q)\leq k\}.$
\end{remark}

\end{document}